\DeclareMathOperator{\dist}{dist}
\DeclareMathOperator{\id}{id}
\DeclareMathOperator{\co}{co}
\DeclareMathOperator{\Proj}{Proj}
\DeclareMathOperator{\interior}{int}
\DeclareMathOperator{\chain}{chain}
\begin{document}

\newtheorem{oberklasse}{OberKlasse}
\newtheorem{lemma}[oberklasse]{Lemma}
\newtheorem{proposition}[oberklasse]{Proposition}
\newtheorem{theorem}[oberklasse]{Theorem}
\newtheorem{remark}[oberklasse]{Remark}
\newtheorem{corollary}[oberklasse]{Corollary}
\newtheorem{definition}[oberklasse]{Definition}
\newtheorem{example}[oberklasse]{Example}
\newtheorem{observation}[oberklasse]{Observation}
\newtheorem{assumption}{Assumption}
\newcommand{\clconvhull}{\ensuremath{\overline{\co}}}
\newcommand{\R}{\ensuremath{\mathbbm{R}}}
\newcommand{\N}{\ensuremath{\mathbbm{N}}}
\newcommand{\Z}{\ensuremath{\mathbbm{Z}}}
\newcommand{\Q}{\ensuremath{\mathbbm{Q}}}
\newcommand{\X}{\ensuremath{\mathcal{X}}}
\newcommand{\M}{\ensuremath{\mathcal{M}}}
\newcommand{\F}{\ensuremath{\mathcal{F}}}
\newcommand{\V}{\ensuremath{\mathcal{V}}}
\newcommand{\h}{\ensuremath{\mathcal{H}}}
\newcommand{\ClSets}{\ensuremath{\mathcal{A}}}
\newcommand{\OpSets}{\ensuremath{\mathcal{O}}}
\newcommand{\CpSets}{\ensuremath{\mathcal{C}}}
\newcommand{\CoCpSets}{\ensuremath{\mathcal{CC}}}
\newcommand{\powerset}{\ensuremath{\mathcal{P}}}
\newcommand{\CB}{\ensuremath{\mathcal{CBC}}}

\newcommand{\tc}{\textcolor}
\newcommand{\mc}{\mathcal}
\newcommand{\myinclude}[1]{}

\renewcommand{\phi}{\ensuremath{\varphi}}
\renewcommand{\epsilon}{\ensuremath{\varepsilon}}

\title{Robust boundary tracking for reachable sets of nonlinear differential inclusions}
\author{Janosch Rieger\footnote{Institut f\"ur Mathematik, Universit\"at Frankfurt,
Postfach 111932, D-60054 Frankfurt a.M., Germany; phone: +49/69/798-22715, fax: +49/69/798-28846.}}
\date{\today}
\maketitle

\begin{abstract}
The Euler scheme is up to date the most important numerical method for ordinary differential inclusions,
because the use of the available higher-order methods is prohibited by their enormous complexity after spatial discretization.
Therefore, it makes sense to reassess the Euler scheme and optimize its performance. In the present paper,
a considerable reduction of the computational cost is achieved by setting up a numerical method that 
computes the boundaries instead of the complete reachable sets of the fully discretized Euler scheme 
from lower-dimensional data only. Rigorous proofs for the propriety of this method are given, and numerical
examples illustrate the gain of computational efficiency as well as the robustness of the scheme against 
changes of topology of the reachable sets.
\end{abstract}

\noindent {\bf Keywords:} differential inclusion, numerical method, reachable sets.\\
\\
\noindent {\bf AMS subject classifications:} 34A60, 65L20.

\section{Introduction}

Consider the differential inclusion
\begin{equation} \label{odi}
x'(t) \in F(t,x(t)),\quad x(0)\in X_0,
\end{equation}
where $F:\R\times\R^d\rightrightarrows\R^d$ is a Lipschitz continuous multivalued mapping that assigns to each pair $(t,x)\in\R\times\R^d$
a convex and compact subset of $\R^d$. Differential inclusions model global aspects of control and deterministic uncertainty. 
In a sense, they are the deterministic counterpart of stochastic differential equations. 
The reachable sets
\[\mc{R}(T,X_0)=\{x(T): x(\cdot)\ \text{solves \eqref{odi}}\}\]
are of considerable interest, because they are the sets of all states that 
can be reached using admissible controls or perturbations.
The theory of differential inclusions is well-developed, and the reader is referred to the monographs
\cite{Aubin:Cellina:84} and \cite{Deimling:92} for details. 

\medskip

The numerical approximation of the set of all solutions of \eqref{odi} and the corresponding reachable sets
remains a challenging subject even in the low-dimensional context, because 
it is massively affected by the curse of dimensionality.
This paper is concerned with the multivalued Euler scheme
\[\Phi:\R\times\R^d\rightrightarrows\R^d,\quad \Phi(x)=x+hF(t,x)\]
for differential inclusions of form \eqref{odi} that has been investigated in \cite{Dontchev:Farkhi:89} and several papers since.
The reachable set $\mc{R}(T,X_0)$ can be approximated by fixing some $N\in\N$ and $h>0$ such that $T=Nh$ and computing
\[\mc{R}_h(t_{n+1},X_0) = \Phi(t_n,\mc{R}_h(t_n,X_0)),\ n=0,\ldots,N-1,\quad \mc{R}_h(t_0,X_0)=X_0\]
recursively, where $t_n=nh$. For practical computations, it is necessary to discretize these sets. This is usually done by 
introducing a grid $\Delta_\rho=\rho\Z^d$ and a fully discrete Euler scheme 
\begin{eqnarray*}
&\tilde\Phi_\alpha:\R\times\Delta_\rho\rightrightarrows\Delta_\rho,\quad \tilde\Phi_\alpha(x)=B_\alpha(\Phi(x))\cap\Delta_\rho,\quad
\tilde{\mc{R}}_h(t_0,X_0)=B_{\alpha}(X_0)\cap\Delta_\rho&\\ 
&\tilde{\mc{R}}_h(t_{n+1},X_0) = \tilde{\Phi}_{\alpha}(t_n,\tilde{\mc{R}}_h(t_n,X_0)),\ n=0,\ldots,N-1.&
\end{eqnarray*}
for some $\alpha\ge\rho/2$, because this setting guarantees that $\tilde\Phi_\alpha(\cdot)$ is a good approximation of $\Phi(\cdot)$.
Spatial discretization and its impact on error estimates have been discussed in \cite{Beyn:Rieger:07}.

\medskip

The computation of reachable sets by the fully discrete Euler scheme is very costly, which is mainly due to the fact that
the images $\tilde\Phi_\alpha(x)$ and $\tilde\Phi_\alpha(\tilde x)$ overlap if $x$ and $\tilde x$ are close to each other
and the computed information is highly redundant. Several attempts have been made to develop faster numerical methods.

In \cite{Lempio:Veliov:98} and \cite{Veliov:89}, a multivalued version of Heun's method and more general Runge-Kutta methods
are shown to converge quadratically w.r.t.\ the time-step under strong assumptions on $F$ when considered without spatial discretization.
Successive evaluations of the discretized multivalued right-hand sides, however, are so costly that in practice, the performance
of such methods is worse than that of the Euler scheme. 

The paper \cite{Sandberg:08} investigates the properties of the Euler scheme for differential inclusions with non-convex right-hand sides
and, in particular, proves first-order convergence in this situation.
As the closures of the reachable sets of a differential inclusion with non-convex right-hand side coincide
with the reachable sets of the convexified problem, this result demonstrates that it is possible to approximate the desired sets
using only extremal points of the right-hand sides for computing the Euler step without losing the order of convergence. Nevertheless,
the resulting version of the Euler scheme has a larger error than its classical pendant, so that the gain of efficiency is moderate.

An unconventional approach to the problem has recently been published in \cite{Baier:Gerdts:Xausa:13}, where the defining 
relations of the Euler or a more general Runge-Kutta scheme are considered as constraints in an optimization routine that
aims to minimize the distance between given points in phase space and the states that can be reached by trajectories of the
numerical scheme. 
This method reduces unnecessary computations efficiently, but bears the risk of losing parts of the reachable set by failure
of the optimizer to find the global minimum.

\medskip

The boundary Euler method proposed and analyzed in the present paper uses a simple but effective strategy to reduce computational costs.
It tracks the boundaries (precise definition in Section 3) of the reachable sets 
of the fully discretized Euler scheme instead of computing the whole reachable sets 
and computes a second layer of 
exterior points close to the boundary  in every time-step from a layer of exterior points in the preimage. 
The boundary and the exterior layer together contain all necessary information about the topology of the discrete reachable sets.
Moreover, the algorithm can be arranged in such a way that instead of full images $F(\cdot)$ 
it is essentially sufficient to work with the boundary $\partial F(\cdot)$ of the right-hand side.
As the boundary of the fully discrete Euler scheme is exactly reproduced, the boundary Euler scheme 
satisfies the same error estimates.

It is surprisingly hard to prove that the boundary Euler method indeed computes the boundaries 
of the discrete reachable sets. To this end, Section 2 gathers some analytical prerequisites  
and Section 3 exploits them to set up a preliminary version of the method. 
The drawback of this preliminary version is that full images $F(\cdot)$
of the right-hand side have to be computed in order to ensure the propriety of the algorithm. Nontrivial
topological arguments given in Section 4 improve the results of Section 2 and facilitate the formulation of the boundary Euler method
in its final form. 

\medskip

The boundary Euler scheme is at present the fastest numerical method for non-stiff ordinary differential inclusions.
Up to our knowledge, the result as well as some of the technical tools necessary for the proof have never been considered in the literature.
Throughout the paper, great care is taken to obtain optimal constants, because
suboptimal constants strongly increase the computational costs of the boundary Euler method without increasing its precision.

\section{Notation and analytical prerequisites}

The notation used in this paper is mostly standard. For any set $A\subset\R^d$, the symbols $A^c$, $\interior A$, and $\partial A$
denote the complement, the interior, and the boundary of $A$. For $p\in[1,\infty]$, the one-sided and the symmetric Hausdorff distances 
(induced by the $p$-norm) between compact sets $A,B\subset\R^d$ are given by 
$\dist(A,B)_p:=\sup_{a\in A}\inf_{b\in B}|a-b|_p$ 
and $\dist_H(A,B)_p:=\max\{\dist(A,B)_p,\dist(B,A)_p\}$. For any $x\in\R^d$, we set $\Proj(x,A)_p:=\{a\in A: |x-a|_p=\dist(x,A)_p\}$,
and for $r\ge 0$ we define $B_r(A)_p:=\{x\in\R^d: \dist(x,A)_p\le r\}$. 

Let $F:\R^{d_1}\rightrightarrows\R^{d_2}$ be a set-valued mapping with compact images. Then $F(\cdot)$ is called $L$-Lipschitz
if $\dist_H(F(x),F(\tilde x))_p \le L|x-\tilde x|_p$ for all $x,\tilde x\in\R^{d_1}$. 
Let $\CoCpSets(\R^d)$ denote the collection of all subsets of $\R^d$ that are convex and compact.
It is well-known that a mapping $F:\R^{d_1}\rightarrow\CoCpSets(\R^{d_2})$ is Lipschitz if and only if its boundary $\partial F(\cdot)$ 
is Lipschitz with the same Lipschitz constant (see \cite{Monteiro:Marques:84}).

The identity mapping is denoted $id:\R^d\rightarrow\R^d$. For any two paths $\phi,\tilde\phi\in C([0,1],\R^d)$ with $\phi(1)=\tilde\phi(0)$,
their concatenation $\tilde\phi\circ\phi\in C([0,1],\R^d)$ is defined by 
\[(\tilde\phi\circ\phi)(\lambda):=\left\{ \begin{array}{ll} \phi(2\cdot\lambda),&0\le\lambda\le 1/2,\\ 
\tilde\phi(2\cdot\lambda-1),&1/2\le\lambda\le 1.\end{array}\right.\]

\medskip

The following result is related to \cite[Theorem 2]{Beyn:Rieger:10}, where more general mappings are discussed
in the Euclidean $\R^d$. 
For the purposes of the present paper, however, it will be essential to measure distances in $|\cdot|_\infty$.
An error term of size $\sqrt{d}$ caused by embedding $(\R^d,|\cdot|_2)$ into $(\R^d,|\cdot|_\infty)$ is not tolerable,
because suboptimal estimates force the algorithm to carry out unnecessary computations.

\begin{proposition} \label{solvable}
Let $G:\R^d\rightarrow\CoCpSets(\R^d)$ be $l$-Lipschitz w.r.t.\ $|\cdot|_\infty$ with $l<1$. 
Then for any  $x^0,\hat y\in\R^d$ there exists $\hat x\in\R^d$ such that $\hat y\in \hat x + G(\hat x)$ and 
\begin{equation} \label{tag:1}
|x^0-\hat x|_\infty \le \frac{\dist(\hat y,x^0+G(x^0))_\infty}{1-l}.
\end{equation}
If $\hat y\notin x^0+G(x^0)$, then there exists $\hat x$ satisfying \eqref{tag:1} and $\hat y\in \hat x+\partial G(\hat x)$.
\end{proposition}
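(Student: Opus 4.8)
The plan is to prove the two assertions in turn: first construct a solution of the inclusion by a contraction-type iteration, then upgrade it to a boundary solution by a continuity argument along a segment.

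For the first part I set $\delta_0:=\dist(\hat y,x^0+G(x^0))_\infty$ and build a sequence $(x^n)$ as follows. Given $x^n$, pick a nearest point $q^n\in\Proj(\hat y,x^n+G(x^n))_\infty$ and write $q^n=x^n+g^n$ with $g^n\in G(x^n)$ and $|q^n-\hat y|_\infty=\dist(\hat y,x^n+G(x^n))_\infty=:\delta_n$. Then set $x^{n+1}:=x^n+(\hat y-q^n)$, so that $\hat y-x^{n+1}=g^n\in G(x^n)$ by construction. The key estimate is $\delta_{n+1}=\dist(g^n,G(x^{n+1}))_\infty\le\dist_H(G(x^n),G(x^{n+1}))_\infty\le l\,|x^n-x^{n+1}|_\infty=l\,\delta_n$, where the last equality uses $|x^{n+1}-x^n|_\infty=|\hat y-q^n|_\infty=\delta_n$. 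Hence $\delta_n\le l^n\delta_0$ and $|x^{n+1}-x^n|_\infty\le l^n\delta_0$, so $(x^n)$ is Cauchy with limit $\hat x$ obeying $|x^0-\hat x|_\infty\le\sum_{n\ge0}l^n\delta_0=\delta_0/(1-l)$, which is exactly \eqref{tag:1}. Since $x\mapsto\dist(\hat y,x+G(x))_\infty$ is continuous and $\delta_n\to0$, the limit satisfies $\dist(\hat y,\hat x+G(\hat x))_\infty=0$, and as $\hat x+G(\hat x)$ is compact this gives $\hat y\in\hat x+G(\hat x)$.

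For the second part, the naive hope that the iterates keep $\hat y$ outside fails: one step may push $\hat y$ into the interior of $x^{n+1}+G(x^{n+1})$. Instead I exploit the solution $\hat x$ just found together with the hypothesis $\hat y\notin x^0+G(x^0)$. Consider the segment $x_t:=(1-t)x^0+t\hat x$, $t\in[0,1]$, and the Hausdorff-continuous curve $t\mapsto x_t+G(x_t)$ (note $\dist_H(x_s+G(x_s),x_t+G(x_t))_\infty\le(1+l)|x_s-x_t|_\infty$). Define $t^*:=\inf\{t\in[0,1]:\hat y\in x_t+G(x_t)\}$. This set is the zero set of the continuous function $t\mapsto\dist(\hat y,x_t+G(x_t))_\infty$, hence closed; it contains $t=1$ and excludes a neighbourhood of $t=0$ since $\hat y\notin x^0+G(x^0)$, so $0<t^*\le1$, $\hat y\in x_{t^*}+G(x_{t^*})$, and $\hat y\notin x_t+G(x_t)$ for $t<t^*$. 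I then claim $\hat y\in\partial(x_{t^*}+G(x_{t^*}))=x_{t^*}+\partial G(x_{t^*})$, whence $\hat x_*:=x_{t^*}$ is the desired point, the estimate being inherited via $|x^0-x_{t^*}|_\infty=t^*|x^0-\hat x|_\infty\le\delta_0/(1-l)$.

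The crux, and the step I expect to be the main obstacle, is ruling out that $\hat y$ lies in the interior of $x_{t^*}+G(x_{t^*})$. This rests on an elementary but norm-sensitive convexity lemma: if $A,B\subset\R^d$ are convex and compact, $B_\eta(\hat y)_\infty\subseteq A$, and $\dist_H(A,B)_\infty<\eta$, then $\hat y\in B$. I would prove it by separating $\hat y$ from $B$ by a functional $u$ with $|u|_1=1$, $\langle u,\hat y\rangle>\sup_{b\in B}\langle u,b\rangle$, and testing the cube corner $w:=\hat y+\eta\,\sign(u)\in B_\eta(\hat y)_\infty\subseteq A$: on one hand $w\in A$ forces $\dist(w,B)_\infty\le\dist_H(A,B)_\infty<\eta$, while on the other $\dist(w,B)_\infty\ge\inf_{b\in B}\langle u,w-b\rangle=\langle u,\hat y\rangle-\sup_{b\in B}\langle u,b\rangle+\eta>\eta$, a contradiction. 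Applying this with $A=x_{t^*}+G(x_{t^*})$ and $B=x_t+G(x_t)$ for $t\uparrow t^*$: were $\hat y$ interior to $A$, some cube $B_\eta(\hat y)_\infty$ would lie in $A$, and for $t$ near $t^*$ the Hausdorff distance would fall below $\eta$, forcing $\hat y\in x_t+G(x_t)$ for some $t<t^*$ and contradicting the definition of $t^*$. Throughout, care must be taken to keep every constant sharp and every estimate in $|\cdot|_\infty$, since a spurious $\sqrt d$ factor from changing norms would be unacceptable for the intended algorithmic application.
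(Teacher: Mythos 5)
Your proposal is correct and follows essentially the same route as the paper: the identical projection--correction iteration for existence and the estimate, and the same segment-plus-infimum argument for the boundary claim. The only difference is that you spell out, via a separation argument tested at a cube corner, the step the paper dispatches with ``the properties of $G$ ensure\ldots'' (namely that $\hat y$ in the interior of $\phi(\lambda^*)+G(\phi(\lambda^*))$ would force $\hat y\in\phi(\lambda)+G(\phi(\lambda))$ for nearby $\lambda<\lambda^*$), and your lemma does this correctly and with the required norm-sharpness.
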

\begin{proof}
Construct successively
\begin{align*}
g^k \in \Proj(\hat y-x^k,G(x^k))_\infty,\quad r^k := \hat y-(x^k+g^k),\quad
x^{k+1} := x^k+r^k
\end{align*}
for $k\in\N$. Then 
\begin{align*}
|r^{k+1}|_\infty &= \dist(\hat y,x^{k+1}+G(x^{k+1}))_\infty = \dist(\hat y, x^k+r^k+G(x^{k+1}))_\infty\\ 
&= \dist(g^k,G(x^{k+1}))_\infty \le \dist(G(x^k),G(x^{k+1}))_\infty\\
&\le l|x^{k+1}-x^k|_\infty \le l|r^k|_\infty
\end{align*}
for $k\in\N$. Hence
\[|x^n-x^m|_\infty \le \sum_{k=m}^{n-1}|r^k|_\infty\le |r^0|_\infty\sum_{k=m}^{n-1}l^k \le\frac{l^m}{1-l}|r^0|_\infty\]
for all $m,n\in\N$ with $m\le n$, so that $\{x^k\}_{k=0}^\infty$ is Cauchy and $\hat x := \lim_{k\rightarrow\infty}x^k$ exists. 
Then
\[|\hat x-x^0|_\infty \le \sum_{k=0}^\infty|r^k|_\infty \le \frac{|r^0|_\infty}{1-l}
=\frac{\dist(\hat y,x^0+G(x^0))_\infty}{1-l},\]
and
\[\dist(\hat y,\hat x+G(\hat x))_\infty = \lim_{k\rightarrow\infty}\dist(\hat y,x^k+G(x^k))_\infty 
= \lim_{k\rightarrow\infty}|r^k|_\infty = 0\]
together with $G(\hat x)\in\CoCpSets(\R^d)$ imply $\hat y\in\hat x+G(\hat x)$.

\medskip

Consider the case $\hat y\notin x^0+G(x^0)$. Construct $\hat x\in\R^d$ with $\hat y\in \hat x+G(\hat x)$ 
and \eqref{tag:1} as above. Define $\phi(\lambda):=\lambda\hat x + (1-\lambda)x^0$ for $\lambda\in[0,1]$.
Let 
\[\lambda^* := \inf\{\lambda\in[0,1]: \hat y\in\phi(\lambda)+G(\phi(\lambda))\}.\]
There exists a sequence $(\lambda_n)_{n\in\N}\subset[0,1]$ with $\lambda_n\searrow\lambda^*$ and
$\hat y\in\phi(\lambda_n)+G(\phi(\lambda_n))$, so that by continuity $\hat y\in\phi(\lambda^*)+G(\phi(\lambda^*))$.

Assume that $\hat y\in\phi(\lambda^*)+\interior G(\phi(\lambda^*))$. Then the properties of $G$ ensure that
there exists $\epsilon>0$ such that 
\begin{equation} \label{tag:2}
\hat y\in\phi(\lambda)+G(\phi(\lambda))\quad\text{for all}\ \lambda\in(\lambda^*-\epsilon,\lambda^*+\epsilon),
\end{equation}
which contradicts minimality of $\lambda^*$. Hence $\hat y\in\phi(\lambda^*)+\partial G(\phi(\lambda^*))$,
and by construction $\phi(\lambda^*)$ satisfies \eqref{tag:1}.
\end{proof}

Obviously, the (time-independent) Euler map $\Phi(x)=x+hF(x)$ satisfies the assumptions of Proposition \ref{solvable}
if $F:\R^d\rightarrow\CoCpSets(\R^d)$ is $L$-Lipschitz and $Lh<1$.

\begin{remark}
The fact that $G$ has convex images is only used in \eqref{tag:2}. One could think of weaker assumptions such as
$l$-Lipschitz continuity of a compact-valued mapping $G:\R^d\rightrightarrows\R^d$ and continuity of the complement 
$G^c:\R^d\rightrightarrows\R^d$ to prevent the sudden formation of holes. This is, however, not the focus of this paper.
\end{remark}

In the following proposition, it is again important to work in the maximum norm to avoid embedding constants 
that may lead to unnecessary restrictions of the step size $h$.

\begin{proposition} \label{connected}
Let $G:\R^d\rightarrow\CoCpSets(\R^d)$ be $l$-Lipschitz w.r.t.\ $|\cdot|_\infty$ with $l<1$.
Then for any $\xi\in\R^d$ the set $(\id+G)^{-1}(\xi):=\{x\in\R^d: \xi\in x+G(x)\}$
is nonempty and path-connected.
\end{proposition}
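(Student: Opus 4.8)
The plan is to obtain nonemptiness directly from Proposition \ref{solvable} and then to establish path-connectedness by constructing a continuous retraction of $\R^d$ onto the solution set $S:=(\id+G)^{-1}(\xi)$. Nonemptiness is immediate: applying Proposition \ref{solvable} with $\hat y=\xi$ and any base point $x^0\in\R^d$ produces some $\hat x\in S$. For path-connectedness it then suffices to find a continuous map $\pi:\R^d\to S$ with $\pi|_S=\id$, because for $x_1,x_2\in S$ the curve $\lambda\mapsto\pi((1-\lambda)x_1+\lambda x_2)$ is a path in $S$ joining them.

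The retraction will be built by iterating a continuous one-step map that mimics the fixed-point iteration in the proof of Proposition \ref{solvable}. Writing $V(x):=\dist(\xi,x+G(x))_\infty$, I would fix a constant $c$ with $1<c<1/l$ and seek a continuous selection $g:\R^d\to\R^d$ with $g(x)\in G(x)$ and $|\xi-x-g(x)|_\infty\le c\,V(x)$ for all $x$, i.e.\ an approximate $|\cdot|_\infty$-projection of $\xi-x$ onto $G(x)$. Setting $\Theta(x):=\xi-g(x)$, the estimate from Proposition \ref{solvable} carries over verbatim: since $g(x)\in G(x)$,
\[V(\Theta(x))=\dist(g(x),G(\Theta(x)))_\infty\le\dist_H(G(x),G(\Theta(x)))_\infty\le l|\Theta(x)-x|_\infty\le lc\,V(x),\]
while $|\Theta(x)-x|_\infty=|\xi-x-g(x)|_\infty\le c\,V(x)$. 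With $l':=lc<1$ one gets $V(\Theta^k(x))\le l'^kV(x)$ and $|\Theta^{k+1}(x)-\Theta^k(x)|_\infty\le c\,l'^kV(x)$, so on every compact set the iterates $\Theta^k$ form a uniformly Cauchy sequence and converge to a continuous limit $\pi$. Continuity of $V$ gives $V(\pi(x))=0$, hence $\pi(x)\in S$; and $V(x)=0$ forces $g(x)=\xi-x$, so $\Theta$ fixes $S$ pointwise and $\pi|_S=\id$, as required.

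The main obstacle is the construction of the continuous approximate selection $g$ while retaining the sharp $|\cdot|_\infty$ contraction constant. An exact nearest-point projection in $|\cdot|_\infty$ is in general multivalued and admits no continuous selection, and replacing it by the unique, continuous Euclidean projection would inflate the constant by a factor $\sqrt d$ and destroy the contraction whenever $l\sqrt d\ge1$. I would resolve this with Michael's selection theorem applied to the set-valued map $x\mapsto\Psi(x):=G(x)\cap\{g:|\xi-x-g|_\infty\le c\,V(x)\}$. Its values are nonempty (they contain the true projection, whose residual equals $V(x)\le cV(x)$), closed, and convex, being the intersection of the convex set $G(x)$ with a cube. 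The delicate point is lower semicontinuity of $\Psi$, and here the strict slack $c>1$ is essential: for $x\notin S$ the true projection lies strictly inside the cube, so by convexity every point of $\Psi(x)$ is approximated by points strictly inside the cube, which are reachable from nearby values $\Psi(x_n)$ by the Hausdorff continuity of $G$; for $x\in S$ the set $\Psi(x)$ is the singleton $\{\xi-x\}$, and any selection from $\Psi(x_n)$ converges to it because its residual is bounded by $c\,V(x_n)\to0$. Michael's theorem then yields the desired continuous $g$, completing the construction.
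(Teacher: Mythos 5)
Your proof is correct, but it takes a genuinely different route from the paper. The paper fixes two solutions $z,\tilde z$, starts from the straight segment joining them, and repeatedly applies the correction step of Proposition \ref{solvable} to the whole path; to keep each correction continuous it projects in the strictly convex $p(k)$-norms with $p(k)=k^2+2$, and the point of that choice is that the accumulated embedding factors multiply up to $d^{\sum_j 1/p(j)}\le d^{\pi^2/6}$, so the contraction survives in the limit even though no single $|\cdot|_p$-projection is a genuine $|\cdot|_\infty$-projection. You instead build, once and for all, a continuous approximate $|\cdot|_\infty$-projection $g(x)\in G(x)$ with residual at most $c\,V(x)$, $1<c<1/l$, via Michael's selection theorem, and iterate the resulting continuous map $\Theta$ to obtain a retraction $\pi:\R^d\to(\id+G)^{-1}(\xi)$. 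Your lower-semicontinuity argument for $\Psi$ is the delicate step and it does go through: away from the fiber the strict slack $c>1$ puts the true projection in the interior of the constraint cube, and on the fiber $\Psi$ collapses to the singleton $\{\xi-x\}$ with residual control $cV(x_n)\to 0$. What each approach buys: yours gives strictly more, since a retract of $\R^d$ is not only path-connected but contractible, and the retraction is a reusable continuous ``solve'' operator; the paper's argument is more elementary and self-contained, needing no selection theorem, and the norm-switching trick with $\sum_j j^{-2}=\pi^2/6$ is exactly the device that substitutes for the continuous selection you obtain from Michael's theorem.
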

\begin{proof}
By Proposition \ref{solvable} we have $(\id+G)^{-1}(\xi)\neq\emptyset$. 
Let $z,\tilde z\in(\id+G)^{-1}(\xi)$. Define $x^0:[0,1]\rightarrow\R^d$ by
\[x^0(\lambda):=\lambda z+(1-\lambda)\tilde z.\]
For $k\in\N$ set $p(k):=k^2+2$ and define successively functions $f^k,r^k,x^{k+1}:[0,1]\rightarrow\R^d$ by 
\begin{align*}
g^k(\lambda) &:= \Proj(\xi-x^k(\lambda),G(x^k(\lambda))))_{p(k)},\\
r^k(\lambda) &:= \xi-(x^k(\lambda)+g^k(\lambda)),\\
x^{k+1}(\lambda) &:= x^k(\lambda)+r^k(\lambda),
\end{align*}
where $\Proj(\cdot,\cdot)_{p}$ denotes the projection w.r.t.\ $|\cdot|_{p}$.
These functions are continuous, because $(\R^d,|\cdot|_p)$ is strictly convex for $1<p<\infty$ 
(see \cite[Section 9.3]{Aubin:Frankowska:90}).
Moreover, $x^k(0)=z$ and $x^k(1)=\tilde z$ hold for all $k\in\N$,
and $g^k(\lambda)\in G(x^k(\lambda))$ for all $\lambda\in[0,1]$ and $k\in\N$.

\medskip

Since $\lambda\mapsto\dist(\xi,x^0(\lambda)+G(x^0(\lambda)))_\infty$ is a continuous function from the compact interval
$[0,1]$ to the real numbers, $\|r^0\|_\infty<\infty$. In addition,
\begin{align*}
&|r^{k+1}(\lambda)|_\infty  \le |r^{k+1}(\lambda)|_{p(k+1)} 
= \dist(\xi,x^{k+1}(\lambda)+G(x^{k+1}(\lambda)))_{p(k+1)}\\
&= \dist(g^k(\lambda),G(x^{k+1}(\lambda)))_{p(k+1)} 
\le \dist(G(x^k(\lambda)),G(x^{k+1}(\lambda)))_{p(k+1)}\\
&\le d^{\frac{1}{p(k+1)}}\dist(G(x^k(\lambda)),G(x^{k+1}(\lambda)))_\infty
\le d^{\frac{1}{p(k+1)}}l|r^k(\lambda)|_\infty,
\end{align*}
so that
\begin{align*}
\|r^k\|_\infty &\le (\prod_{j=1}^kd^\frac{1}{p(j)})l^k\|r^0\|_\infty 
\le (\prod_{j=1}^kd^\frac{1}{j^2})l^k\|r^0\|_\infty\\
&= d^{\sum_{j=1}^k\frac{1}{j^2}}l^k\|r^0\|_\infty \le d^\frac{\pi^2}{6}l^k\|r^0\|_\infty.
\end{align*}
For $m,n\in\N$ with $m\le n$, it follows that 
\begin{align*}
\|x^n-x^m\|_\infty &\le \sum_{k=m}^{n-1}\|r^k\|_\infty 
\le d^\frac{\pi^2}{6}\|r^0\|_\infty \sum_{k=m}^{n-1}l^k 
\le d^\frac{\pi^2}{6}\|r^0\|_\infty\frac{l^m}{1-l}.
\end{align*}
Hence $\{x^k\}_{k=0}^\infty\subset (C([0,1],\R^d),\|\cdot\|_\infty)$ is Cauchy,
and there exists $\hat x\in C([0,1],\R^d)$ with $\lim_{k\rightarrow\infty}\|x^k-\hat x\|_\infty = 0$.
It is clear that $\hat x(0)=z$ and $\hat x(1)=\tilde z$. Finally,
\begin{align*}
0 \leftarrow d^\frac{1}{p(k)}\|r^k\|_\infty &\ge \|r^k\|_{p(k)} = \dist(\xi,(\id+G)(x^k(\lambda)))_{p(k)} \\
&\ge \dist(\xi,(\id+G)(x^k(\lambda)))_\infty \rightarrow \dist(\xi,(\id+G)(\hat x(\lambda)))_\infty
\end{align*}
for $k\rightarrow\infty$ implies $\xi\in(\id+G)(\hat x(\lambda))$ for all $\lambda\in[0,1]$.
\end{proof}

The following observation is the main motivation for the development of the boundary Euler method.

\begin{proposition} \label{boundary:from:boundary}
Let $F:\R^d\rightarrow\CoCpSets(\R^d)$ be $L$-Lipschitz, let $h>0$ be so small that $Lh<1$, and let $M\subset\R^d$ be compact.
\begin{itemize}
\item [a)] If $x\in M$ and $\delta:=\dist(x,\partial M)_\infty>0$, then 
\[B_{(1-Lh)\delta}(\Phi(x))_\infty \subset \Phi(M)\]
\item [b)] For every $y\in\partial(\Phi(M))$, we have $\emptyset\neq\Phi^{-1}(y)\cap M\subset\partial M$,
and for every $x\in\partial M$ with $y\in\Phi(x)$ we have $y\in\partial\Phi(x)$.
\end{itemize}
\end{proposition}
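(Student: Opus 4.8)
The plan is to deduce everything from the solvability estimate of Proposition~\ref{solvable}, applied to the $Lh$-Lipschitz map $G=hF$ (so that $\Phi=\id+G$ and $l=Lh<1$), together with two elementary topological observations.

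For part a), I would fix $y\in B_{(1-Lh)\delta}(\Phi(x))_\infty$ and apply Proposition~\ref{solvable} with starting point $x^0=x$ and target $\hat y=y$. This produces a point $\hat x$ with $y\in\hat x+hF(\hat x)=\Phi(\hat x)$ and
\[|x-\hat x|_\infty\le\frac{\dist(y,\Phi(x))_\infty}{1-Lh}\le\frac{(1-Lh)\delta}{1-Lh}=\delta.\]
It then remains to check that $\hat x\in M$, i.e.\ that the closed ball $\{z:|x-z|_\infty\le\delta\}$ lies in $M$. This is where the hypothesis $\dist(x,\partial M)_\infty=\delta$ enters: any segment from $x$ to a point outside $M$ must meet $\partial M$, and the first such crossing is at $|\cdot|_\infty$-distance strictly less than $\delta$ from $x$, contradicting the definition of $\delta$; hence the open $\delta$-ball lies in $M$, and since $M$ is compact so does its closure. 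The equality case $|x-\hat x|_\infty=\delta$ is the only slightly delicate point, and it is exactly why I insist on the \emph{closed} ball and on $M$ being closed. This gives $\hat x\in M$ and $y\in\Phi(\hat x)\subset\Phi(M)$.

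For part b), I would first record that $\Phi(M)$ is compact: $M$ is compact and $x\mapsto\Phi(x)$ is continuous in the Hausdorff metric with compact values, so its image is closed and bounded. Consequently $y\in\partial\Phi(M)\subset\Phi(M)$, which yields a preimage $x\in M$ with $y\in\Phi(x)$, proving $\Phi^{-1}(y)\cap M\neq\emptyset$. To show $\Phi^{-1}(y)\cap M\subset\partial M$, I argue by contradiction: if some $x\in\Phi^{-1}(y)\cap M$ had $\delta:=\dist(x,\partial M)_\infty>0$, then part a) would give $B_{(1-Lh)\delta}(\Phi(x))_\infty\subset\Phi(M)$; since $y\in\Phi(x)$ and $(1-Lh)\delta>0$, this neighborhood contains an open $|\cdot|_\infty$-ball around $y$, whence $y\in\interior\Phi(M)$, contradicting $y\in\partial\Phi(M)$.

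Finally, the last assertion of part b) is essentially free: if $x\in\partial M$ satisfies $y\in\Phi(x)$ but $y\in\interior\Phi(x)$, then an open ball around $y$ lies in $\Phi(x)\subset\Phi(M)$, so again $y\in\interior\Phi(M)$, contradicting $y\in\partial\Phi(M)$; hence $y\in\partial\Phi(x)$ (this also trivially covers the case where $\Phi(x)$ has empty interior). The genuine obstacle in the whole proof is thus concentrated in part a): squeezing the solvability estimate of Proposition~\ref{solvable} into the sharp radius $(1-Lh)\delta$ and controlling the boundary-equality case so that the recovered preimage is guaranteed to land in $M$. The remaining implications are short topological consequences of the inclusion $\Phi(x)\subset\Phi(M)$ and the compactness of $\Phi(M)$.
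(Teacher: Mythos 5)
Your argument is correct and follows essentially the same route as the paper: part a) is obtained by applying Proposition~\ref{solvable} to recover a preimage $\hat x$ with $|x-\hat x|_\infty\le\delta$ (hence $\hat x\in M$), and part b) is then a short topological consequence of a) together with the closedness of $\Phi(M)$ and the fact that $y\in\interior\Phi(x)$ would force $y\in\interior\Phi(M)$. The only difference is that you spell out the two points the paper leaves implicit, namely that the closed $\delta$-ball around $x$ lies in $M$ and that $\Phi(M)$ is compact.
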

\begin{proof}
\begin{itemize}
\item [a)] Let $y\in\Phi(x)$ and $\tilde y\in B_{(1-Lh)\delta}(y)_\infty$. By Proposition \ref{solvable},
there exists $\tilde x \in\R^d$ such that $\tilde y\in\Phi(\tilde x)$ and
\[|\tilde x-x|_\infty \le \frac{\dist(\tilde y,\Phi(x))_\infty}{1-Lh} \le \frac{|\tilde y-y|_\infty}{1-Lh} \le \delta,\]
so that $\tilde x\in M$ and $\tilde y\in\Phi(M)$.
\item [b)] For every $y\in\partial(\Phi(M))$, there exists $x\in M$ with $y\in\Phi(x)$.
By a), we have $x\in\partial M$. Since $y\in\interior\Phi(x)$ contradicts $y\in\partial(\Phi(M))$, it follows that $y\in\partial\Phi(x)$.
\end{itemize}
\end{proof}

Proposition \ref{boundary:from:boundary} tells us that $\partial\Phi(M)$ can be entirely reconstructed from the
boundaries of images of $\partial M$, but it does not provide any clue how such a reconstruction could be achieved. 
Counterexamples not included here indicate that there is no simple algorithm for this task.

\section{The fully discrete boundary Euler}

In the following, we set up the terminology for handling discrete sets and the space-discrete Euler map.
Fix a grid $\Delta_\rho:=\rho\Z^d\subset\R^d$ and let $A\subset\R^d$. Then
\begin{align*}
\partial_\rho^0 A &:= \{a\in A\cap\Delta_\rho: \exists x\in\Delta_\rho\setminus A\ \text{s.t.}\ |x-a|_\infty=\rho\},\\
\interior_\rho A &:= (A\cap\Delta_\rho)\setminus\partial_\rho^0 A,\\
\partial_\rho^k A &:= \{x\in\Delta_\rho\setminus A: \dist(x,\partial^0 A)=k\rho\},\ k\in\N_1,\\
\partial_\rho^{-k} A &:= \{a\in A\cap\Delta_\rho: \dist(a,\partial^0 A)=k\rho\},\ k\in\N_1
\end{align*}
are the discrete equivalents of the boundary and the interior of $A$, layers in the complement and layers in the
interior of $A$. 

As usual, it is necessary to take blowups 
\begin{align*}
\Phi_\alpha(x) := B_\alpha(\Phi(x))\quad\text{and}\quad \Phi_\alpha^\partial(x) := B_\alpha(\partial\Phi(x))
\end{align*}
with $\alpha\ge\rho/2$ of all maps under consideration, so that their intersection with $\Delta_\rho$ is
a well-defined $\alpha$-close approximation of the original mappings w.r.t.\ the Hausdorff distance (see e.g.\ \cite{Beyn:Rieger:07}). 
Moreover, it may happen that complicated values $F(x)$, $x\in\R^d$,
of the right-hand side cannot be computed exactly. In that case, it is possible to use overapproximations 
by convex polytopes as analyzed in \cite[Lemma 19]{Rieger:11}. The impact on the error of the Euler scheme
is estimated in \cite[Proposition 4]{Beyn:Rieger:07}. We will therefore consider maps $\Phi_{\alpha,\beta}(\cdot)$
and $\Phi_{\alpha,\beta}^\partial(\cdot)$ with
\begin{align*}
\Phi_{\alpha}(\cdot)\subset\Phi_{\alpha,\beta}(\cdot)\subset B_\beta(\Phi_{\alpha}(\cdot))\quad\text{and}\quad
\Phi_{\alpha}^\partial(\cdot)\subset\Phi_{\alpha,\beta}^\partial(\cdot)\subset B_\beta(\Phi_{\alpha}^\partial(\cdot)) 
\end{align*}
for all $x\in\R^d$. The results below will show that it makes sense to use step-sizes $0<h\le h^* := \frac{1}{4L}$ and fixed parameters
\[\alpha^*:=(1+Lh)\rho/2,\quad 0\le\beta^*<\min\{(1-3Lh)\rho,(1-Lh)\rho/2\}.\]
In the following, $M\subset\Delta_\rho$ will be a compact set. We will show that the boundary Euler is well-defined on $M$
in the sense that it correctly computes $\partial_\rho^0(\Phi_{\alpha^*,\beta^*}(M))$ and $\partial_\rho^1(\Phi_{\alpha^*,\beta^*}(M))$
from $\partial_\rho^0M$ and $\partial_\rho^1M$. Therefore, the boundary Euler method computes the same reachable sets
as the original Euler scheme $\Phi_{\alpha^*,\beta^*}(\cdot)\cap\Delta_\rho$ discussed in \cite{Beyn:Rieger:07}.

\medskip

Proposition \ref{boundary} shows that knowing $\partial_\rho^0M$ and $\partial_\rho^{-1}M$ suffices to compute 
a superset of $\partial_\rho^0(\Phi_{\alpha^*,\beta^*}(M))$. We do not assume that the discrete interior is nonempty
or has any particular properties.
\begin{proposition} \label{boundary}
For every $y\in\partial_\rho^0(\Phi_{\alpha^*,\beta^*}(M))$ there exists $x\in\partial_\rho^0M\cup\partial_\rho^{-1}M$ 
with $y\in\Phi_{\alpha^*}^\partial(x)$.
\end{proposition}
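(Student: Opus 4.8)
The plan is to locate a grid point $x$ in the two-layer band $\partial_\rho^0M\cup\partial_\rho^{-1}M$ whose image-boundary passes within $\alpha^*$ of $y$, by reconstructing a boundary point of the whole image $\Phi(M)$ near $y$ and then pulling it back through Proposition~\ref{boundary:from:boundary}. First I would unpack the hypothesis: since $M\subset\Delta_\rho$ is compact it is a finite set of grid points, so $\Phi(M)=\bigcup_{u\in M}\Phi(u)$ is a finite union of convex compacta. The relation $y\in\partial_\rho^0(\Phi_{\alpha^*,\beta^*}(M))$ supplies a witness $x'\in M$ with $y\in\Phi_{\alpha^*,\beta^*}(x')$, hence $\dist(y,\Phi(M))_\infty\le\alpha^*+\beta^*$, together with an exterior grid neighbour $z\in\Delta_\rho\setminus\Phi_{\alpha^*,\beta^*}(M)$ satisfying $|z-y|_\infty=\rho$; since $\Phi_{\alpha^*}(\cdot)\subset\Phi_{\alpha^*,\beta^*}(\cdot)$ this gives $\dist(z,\Phi(M))_\infty>\alpha^*$.

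The geometric heart of the argument is to produce $\eta\in\partial\Phi(M)$ with $|y-\eta|_\infty\le\alpha^*$. Here I would exploit that $z$ is a grid neighbour in the maximum norm: along the affine segment $w(t)=y+t(z-y)$ one has $|w(t)-y|_\infty=t\rho$ and $|w(t)-z|_\infty=(1-t)\rho$ exactly, so the $\ell^\infty$-distances add up along this particular direction. Tracking the extremal parameter at which $w(t)$ meets $\Phi(M)$ yields a boundary point whose distance to $y$ is controlled by $\rho-\alpha^*=(1-Lh)\rho/2$, which the choice $\alpha^*=(1+Lh)\rho/2$ makes at most $\alpha^*$. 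Once such an $\eta$ is in hand, Proposition~\ref{boundary:from:boundary}(b), applied to the compact set $\Phi(M)$, returns a grid point $u\in M$ with $\eta\in\partial\Phi(u)$, and then $\dist(y,\partial\Phi(u))_\infty\le|y-\eta|_\infty\le\alpha^*$ gives $y\in B_{\alpha^*}(\partial\Phi(u))=\Phi_{\alpha^*}^\partial(u)$, which is the conclusion with $x=u$.

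It remains to certify that $u$ lies in the band $\partial_\rho^0M\cup\partial_\rho^{-1}M$ rather than deep in the discrete interior. For this I would argue on the domain side: feeding the exterior neighbour $z$ into Proposition~\ref{solvable} with starting point $u$ produces a preimage $\bar x$ with $z\in\Phi(\bar x)$ and $|\bar x-u|_\infty\le\dist(z,\Phi(u))_\infty/(1-Lh)$, whose nearest grid point cannot lie in $M$ (otherwise $z$ would be within $\alpha^*$ of $\Phi(M)$, contradicting its exteriority). The step-size bound $h\le h^*=1/(4L)$ is exactly what forces this complement grid point to sit within two layers of $u$, placing $u$ in the required band; the complementary implication, that a $u$ lying any deeper would push $\eta$ into $\interior\Phi(M)$, follows from Proposition~\ref{boundary:from:boundary}(a) with its radius $(1-Lh)\delta$.

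The main obstacle I anticipate is obtaining the \emph{optimal} constant $\alpha^*$ in the estimate $|y-\eta|_\infty\le\alpha^*$ rather than the naive $\alpha^*+\beta^*$ coming from merely projecting $y$ onto $\Phi(M)$: the gap $\beta^*$ must be absorbed precisely when $y$ falls in the thin collar $\Phi_{\alpha^*,\beta^*}(M)\setminus\Phi_{\alpha^*}(M)$, and this is where the grid-point nature of both $y$ and $z$, the $\ell^\infty$ additivity along $w(t)$, and the balance $\alpha^*+(\rho-\alpha^*)=\rho$ against $\beta^*<(1-Lh)\rho/2$ must be used together. Matching the reconstructed layer to exactly two layers in the domain-side estimate is the second delicate point, and I expect both to hinge on the careful constant bookkeeping the paper emphasises.
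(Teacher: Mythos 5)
Your plan has a genuine gap at its central step, and it is exactly the one you name as ``the main obstacle'' without closing it: the construction of $\eta\in\partial\Phi(M)$ with $|y-\eta|_\infty\le\alpha^*$. Your segment argument along $w(t)=y+t(z-y)$ works only if the segment actually meets $\Phi(M)$, i.e.\ essentially only when $y\in\Phi(M)$ (then the exit point $\eta$ satisfies $|z-\eta|_\infty\ge\dist(z,\Phi(M))_\infty>\alpha^*$, hence $|y-\eta|_\infty<\rho-\alpha^*=(1-Lh)\rho/2\le\alpha^*$, as you intend). But the hypothesis only yields $\dist(y,\Phi(M))_\infty\le\alpha^*+\beta^*$; when $y$ lies in the collar $\Phi_{\alpha^*,\beta^*}(M)\setminus\Phi(M)$ the segment from $y$ to $z$ may miss $\Phi(M)$ entirely, and projecting $y$ onto $\Phi(M)$ only gives the ``naive'' bound $\alpha^*+\beta^*$. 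This is not a bookkeeping issue that grid additivity can absorb: since $\partial\Phi(u)\subset\Phi(u)\subset\Phi(M)$ for every $u\in M$, one has $\dist(y,\partial\Phi(u))_\infty\ge\dist(y,\Phi(M))_\infty$ for \emph{every} candidate $u$, so any route that goes through a point of $\partial\Phi(M)$ near $y$ is dead on arrival unless the case $\dist(y,\Phi(M))_\infty>\alpha^*$ is first ruled out. That exclusion is precisely what the paper's final step supplies (by contradiction: if $y$ were not $\alpha^*$-close to $\partial\Phi(x)$ for the witness $x$, then $B_{2\alpha^*}(y)_\infty\subset\Phi_{\alpha^*}(x)$ and $y$ could not have an exterior grid neighbour), and your writeup contains no substitute for it.

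Structurally you also diverge from the paper more than is necessary. The paper never reconstructs a boundary point of the union $\Phi(M)$: it keeps the original witness $x\in M$ with $y\in\Phi_{\alpha^*,\beta^*}(x)$, applies Proposition~\ref{solvable} to the \emph{exterior} neighbour (solving $\eta\in\Phi(z)$ from starting point $x$), rounds $z$ to a grid point which must lie in $M^c$, and concludes $|x-\tilde z|_\infty<3\rho$, so that the same $x$ is both in the band and (by the contradiction above) satisfies $y\in\Phi_{\alpha^*}^\partial(x)$. Your localization of $u$ via Proposition~\ref{solvable} is essentially this argument transplanted to a different point and is sound, but two of your auxiliary appeals are empty: Proposition~\ref{boundary:from:boundary} must be applied to the domain set $M$ (not to $\Phi(M)$), and for a discrete $M\subset\Delta_\rho$ one has $\partial M=M$, so part (a) and the localization content of part (b) are vacuous; the only surviving (and correctly used) piece is that $\eta\in\partial(\Phi(M))$ and $\eta\in\Phi(u)$ force $\eta\in\partial\Phi(u)$.
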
 
\begin{proof}
For $y\in\partial_\rho^0(\Phi_{\alpha^*,\beta^*}(M))$ there exist $x\in M$ with $y\in\Phi_{\alpha^*,\beta^*}(x)$ and
$\eta\in\partial_\rho^1(\Phi_{\alpha^*,\beta^*}(M))$ such that $|\eta-y|_\infty=\rho$. Then
\[\dist(\eta,\Phi(x))_\infty = |\eta-y|_\infty + \dist(y,\Phi(x))_\infty \le \rho+\alpha^*+\beta^*,\]
so that by Proposition \ref{solvable} there exists $z\in\R^d$ with $\eta\in\Phi(z)$ and
\[|x-z|_\infty \le \frac{\rho+\alpha^*+\beta^*}{1-Lh}.\]
There exists $\tilde z\in B_{\rho/2}(z)\cap\Delta_\rho$. We have
\begin{align*}
\dist(\eta,\Phi_{\alpha^*}(\tilde z))_\infty \le \dist(\eta,\Phi(z))_\infty + \dist(\Phi(z),\Phi_{\alpha^*}(\tilde z))_\infty = 0,
\end{align*}
so that $\eta\in\Phi_{\alpha^*,\beta^*}(\tilde z)$ and hence $\tilde z\in M^c\cap\Delta_\rho$. Because of
\begin{align*}
|x-\tilde z|_\infty \le |x-z|_\infty + |z-\tilde z|_\infty \le \frac{\rho+\alpha^*+\beta^*}{1-Lh}+\frac{\rho}{2} < 3\rho,
\end{align*}
we have $x\in\partial_\rho^0M\cup\partial_\rho^{-1}M$.

Assume that $y\notin\Phi_{\alpha^*}^\partial(x)$. Then for all $\tilde y\in B_\rho(y)_\infty\cap\Delta_\rho$, we have
\[\tilde y \in B_\rho(y)_\infty \subset B_{2\alpha^*}(y)_\infty \subset \Phi_{\alpha^*}(x),\]
which contradicts $y\notin\partial_\rho^0(\Phi_{\alpha^*,\beta^*}(M))$.
\end{proof}

The following proposition tells us that a superset of the outer layer $\partial_\rho^1(\Phi_{\alpha^*,\beta^*}(M))$ can be 
computed from two outer layers of the preimage. 
\begin{proposition} \label{layer}
For every $\eta\in\partial_\rho^1(\Phi_{\alpha^*,\beta^*}(M))$, there exists $z\in\partial_\rho^1M\cup\partial_\rho^2M$ such that
$\eta\in\Phi_{\alpha^*,\beta^*}^\partial(z)$.
\end{proposition}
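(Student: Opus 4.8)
The plan is to mirror the structure of the proof of Proposition~\ref{boundary}, but now tracking a point $\eta$ that lies \emph{outside} $\Phi_{\alpha^*,\beta^*}(M)$ rather than on its boundary. Let $\eta\in\partial_\rho^1(\Phi_{\alpha^*,\beta^*}(M))$. By definition, $\eta\in\Delta_\rho\setminus\Phi_{\alpha^*,\beta^*}(M)$ and there exists a boundary point $y\in\partial_\rho^0(\Phi_{\alpha^*,\beta^*}(M))$ with $|\eta-y|_\infty=\rho$. First I would invoke Proposition~\ref{boundary} (or rather its internal construction) to produce a grid point $\tilde z$ near the preimage of $\eta$: applying Proposition~\ref{solvable} to the Euler map with $\hat y=\eta$ and a suitable $x^0\in M$ with $y\in\Phi_{\alpha^*,\beta^*}(x^0)$ yields some $z\in\R^d$ with $\eta\in\Phi(z)$ and a bound on $|x^0-z|_\infty$ of the form $(\rho+\alpha^*+\beta^*)/(1-Lh)$. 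Then I would pick $\tilde z\in B_{\rho/2}(z)\cap\Delta_\rho$, so that $\eta\in\Phi_{\alpha^*,\beta^*}(\tilde z)$ as in the earlier proof.

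The decisive new point is to show that this $\tilde z$ actually lies in $\partial_\rho^1M\cup\partial_\rho^2M$, i.e.\ that it is an \emph{exterior} grid point at distance exactly $\rho$ or $2\rho$ from $\partial_\rho^0M$, and that $\eta\in\Phi_{\alpha^*,\beta^*}^\partial(\tilde z)$ rather than merely $\eta\in\Phi_{\alpha^*,\beta^*}(\tilde z)$. For the membership $\tilde z\in M^c$, I would argue by contradiction: if $\tilde z\in M$, then combining part~a) of Proposition~\ref{boundary:from:boundary} with the chosen constants $\alpha^*,\beta^*$ should force $\eta$ into the interior of $\Phi_{\alpha^*,\beta^*}(M)$, contradicting $\eta\notin\Phi_{\alpha^*,\beta^*}(M)$. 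This is where the precise inequalities $\beta^*<(1-3Lh)\rho$ and $\beta^*<(1-Lh)\rho/2$ together with $\alpha^*=(1+Lh)\rho/2$ and $h\le 1/(4L)$ must be used to close the gap, so that the blowup radius $(1-Lh)\delta$ in part~a) genuinely covers the ball $B_\rho(\eta)$ whenever $\tilde z$ were interior. For the distance bound placing $\tilde z$ in $\partial_\rho^1M\cup\partial_\rho^2M$ rather than farther out, I would carry out the analogue of the estimate $|x^0-\tilde z|_\infty<3\rho$ from the proof of Proposition~\ref{boundary}, now sharpened so that $\tilde z$ sits within $2\rho$ of $\partial_\rho^0M$: since $\tilde z\in M^c$, its distance to $\partial_\rho^0 M$ equals $k\rho$ for some $k\ge1$, and the explicit bound on $|x^0-\tilde z|_\infty$ should yield $k\in\{1,2\}$.

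Finally, to upgrade $\eta\in\Phi_{\alpha^*,\beta^*}(\tilde z)$ to $\eta\in\Phi_{\alpha^*,\beta^*}^\partial(\tilde z)$, I would repeat the boundary argument from the end of the proof of Proposition~\ref{boundary}: assuming $\eta\notin\Phi_{\alpha^*}^\partial(\tilde z)$, every grid neighbour $\tilde\eta\in B_\rho(\eta)_\infty\cap\Delta_\rho$ would satisfy $\tilde\eta\in B_{2\alpha^*}(\eta)_\infty\subset\Phi_{\alpha^*}(\tilde z)\subset\Phi_{\alpha^*,\beta^*}(\tilde z)$, in particular the boundary neighbour $y$, contradicting $\eta\in\partial_\rho^1(\Phi_{\alpha^*,\beta^*}(M))$ since it would place $\eta$'s neighbourhood inside the set. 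I expect the main obstacle to be the second paragraph: calibrating the constants so that the ``$\tilde z\in M$ forces $\eta$ interior'' contradiction is airtight, and simultaneously confining $\tilde z$ to the first two exterior layers, requires that the quantitative estimates from Propositions~\ref{solvable} and~\ref{boundary:from:boundary} interlock exactly with the prescribed choices of $\alpha^*$ and $\beta^*$, leaving no slack for $\tilde z$ to escape to a deeper layer.
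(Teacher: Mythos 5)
There is a genuine gap, and it sits exactly where you anticipated trouble would \emph{not} be: in upgrading $\eta\in\Phi_{\alpha^*,\beta^*}(\tilde z)$ to $\eta\in\Phi_{\alpha^*,\beta^*}^\partial(\tilde z)$. Your construction only invokes the first assertion of Proposition~\ref{solvable}, which produces $z$ with $\eta\in\Phi(z)$ --- possibly deep in the interior of $\Phi(z)$. The paper instead uses the \emph{second} assertion of Proposition~\ref{solvable}: since $\eta\notin\Phi_{\alpha^*,\beta^*}(M)\supset\Phi(x)$, the hypothesis $\hat y\notin x^0+G(x^0)$ is satisfied, so the preimage point $\tilde x$ can be chosen with $\eta\in\partial\Phi(\tilde x)$ \emph{exactly}; the Lipschitz continuity of $x\mapsto\partial\Phi(x)$ (the Monteiro Marques result quoted in Section~2) then gives $\dist(\eta,\partial\Phi(z))_\infty\le(1+Lh)\rho/2=\alpha^*$ for the nearby grid point $z$, i.e.\ $\eta\in\Phi_{\alpha^*}^\partial(z)\subset\Phi_{\alpha^*,\beta^*}^\partial(z)$. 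Your substitute argument at the end fails: the neighbour-covering contradiction at the end of the proof of Proposition~\ref{boundary} works only because there the base point $x$ lies in $M$, so that $B_{2\alpha^*}(y)_\infty\subset\Phi_{\alpha^*}(x)\subset\Phi_{\alpha^*,\beta^*}(M)$ contradicts the existence of an exterior grid neighbour of $y$. In your situation $\tilde z\in M^c$, so the inclusion $B_{2\alpha^*}(\eta)_\infty\subset\Phi_{\alpha^*}(\tilde z)$ says nothing about $\Phi_{\alpha^*,\beta^*}(M)$ and contradicts nothing about $\eta\in\partial_\rho^1(\Phi_{\alpha^*,\beta^*}(M))$. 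Worse, the desired conclusion can genuinely be false for your particular $\tilde z$: nothing prevents $\eta$ from lying far from $\partial\Phi(\tilde z)$, in which case $\eta\notin\Phi_{\alpha^*,\beta^*}^\partial(\tilde z)$ and one must find a different exterior point --- which is precisely what the boundary clause of Proposition~\ref{solvable} delivers.

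Two smaller remarks. First, your route to $\tilde z\in M^c$ via Proposition~\ref{boundary:from:boundary}~a) is both overcomplicated and inapplicable: for a discrete set $M\subset\Delta_\rho$ the topological boundary satisfies $\partial M=M$, so $\delta=0$ and part~a) is vacuous. The membership is immediate: $\eta\in\Phi_{\alpha^*,\beta^*}(\tilde z)$ together with $\eta\notin\Phi_{\alpha^*,\beta^*}(M)$ forces $\tilde z\notin M$. Second, your distance estimate confining $\tilde z$ to the first two exterior layers is correct and coincides with the paper's: $|x-\tilde z|_\infty\le(\rho+\alpha^*+\beta^*)/(1-Lh)+\rho/2<3\rho$, hence $\le 2\rho$ on the grid. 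So the skeleton of your argument is sound; what is missing is the single idea that makes the proposition true, namely that the solvability result can place $\eta$ on the exact boundary of the image of the preimage point whenever $\eta$ lies outside the image of the starting point.
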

\begin{proof}
There exist $\xi\in\partial_\rho^0(\Phi_{\alpha^*,\beta^*}(M))$ with $|\xi-\eta|_\infty = \rho$ and
some $x\in M$ such that $\xi\in\Phi_{\alpha^*,\beta^*}(x)$. Since $\eta\notin\Phi_{\alpha^*,\beta^*}(M)$, we have 
$\eta\in \Phi_{\alpha^*,\beta^*}(x)^c\subset \Phi(x)^c$. Now
\[\dist(\eta,\Phi(x))_\infty \le |\eta-\xi|_\infty + \dist(\xi,\Phi(x))_\infty \le \rho+\alpha^*+\beta^*,\]
and by Proposition \ref{solvable} there exists $\tilde x\in\R^d$ with $\eta\in\partial\Phi(\tilde x)$ and
\[|x-\tilde x|_\infty \le \frac{\dist(\eta,\Phi(x))_\infty}{1-Lh} \le \frac{\rho+\alpha^*+\beta^*}{1-Lh}.\]
There exists $z\in B_{\rho/2}(\tilde x)\cap\Delta_\rho$, and since $\partial\Phi(\cdot)$ is $Lh$-Lipschitz, 
we have
\begin{align*}
\dist(\eta,\Phi_{\alpha^*}^\partial(z))_\infty \le \dist(\eta,\partial\Phi(\tilde x)) 
+ \dist(\partial\Phi(\tilde x),\Phi_{\alpha^*}^\partial(z))_\infty = 0,
\end{align*}
so that $\eta\in\Phi_{\alpha^*,\beta^*}^\partial(z)$. Moreover,
\[|x-z|_\infty \le |x-\tilde x|_\infty + |\tilde x-z|_\infty \le \frac{\rho+\alpha^*+\beta^*}{1-Lh} + \frac{\rho}{2} < 3\rho,\]
and hence $\dist(z,M)_\infty\le 2\rho$.
\end{proof}

In the propositions above, we guaranteed that we can compute supersets of $\partial_\rho^0(\Phi_{\alpha^*,\beta^*}(M))$
and $\partial_\rho^1(\Phi_{\alpha^*,\beta^*}(M))$.
We now have to ensure that we can get rid of the unwanted parts of these supersets.

\begin{proposition} \label{intersection}
Let $y\in M^c\cap\Delta_\rho$, $z\in\interior_\rho M$, and $\xi\in\R^d$ be points such that
$\xi\in\Phi_{\alpha^*,\beta^*}(y)\cap\Phi_{\alpha^*,\beta^*}(z)$. Then there exists $x\in\partial_\rho^0 M$ with $\xi\in\Phi_{\alpha^*}(x)$.
\end{proposition}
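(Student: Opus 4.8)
The plan is to produce the required boundary grid point by exhibiting a genuine preimage $\hat x\in\R^d$ of $\xi$ under $\Phi=\id+hF$ that sits within $|\cdot|_\infty$-distance $\rho/2$ of some $x\in\partial_\rho^0M$, and then to ``round'' this preimage onto $x$. The rounding step is elementary and will be reused: if $\xi\in\Phi(\hat x)$ and $x\in\Delta_\rho$ satisfies $|x-\hat x|_\infty\le\rho/2$, then, since $\Phi$ is $(1+Lh)$-Lipschitz w.r.t.\ $|\cdot|_\infty$, one has $\Phi(\hat x)\subset B_{(1+Lh)\rho/2}(\Phi(x))_\infty=B_{\alpha^*}(\Phi(x))_\infty=\Phi_{\alpha^*}(x)$, whence $\xi\in\Phi_{\alpha^*}(x)$. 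So the whole problem reduces to placing a preimage of $\xi$ next to a discrete boundary point.

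The geometric device is the thickened set $\bar M:=B_{\rho/2}(M)_\infty$, a union of closed grid cubes $\bar C_m:=B_{\rho/2}(m)_\infty$, $m\in M$. Two grid facts drive the argument. First, if $m\in\interior_\rho M$ then all its Moore neighbours lie in $M$, so $B_{3\rho/2}(m)_\infty=\bigcup_{v\in\{-1,0,1\}^d}\bar C_{m+\rho v}\subset\bar M$; hence $\bar C_m\subset\interior\bar M$, so no point of $\partial\bar M$ can lie in a cube centred at an interior grid point. Consequently, whenever $p\in\partial\bar M$ and $p\in\bar C_x$ with $x\in M$, one automatically has $x\in\partial_\rho^0M$. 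Second, applying Proposition~\ref{boundary:from:boundary}a) to the compact set $\bar M$ at the point $z$, where $\delta:=\dist(z,\partial\bar M)_\infty\ge 3\rho/2$, and using $\dist(\xi,\Phi(z))_\infty\le\alpha^*+\beta^*<(1-Lh)\tfrac{3\rho}{2}\le(1-Lh)\delta$, shows $\xi\in\Phi(\bar M)$, i.e.\ there is a preimage $\hat z\in\bar M$ with $\xi\in\Phi(\hat z)$.

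Next I feed the exterior point into Proposition~\ref{solvable}: from $\dist(\xi,\Phi(y))_\infty\le\alpha^*+\beta^*$ I obtain $\hat y\in\R^d$ with $\xi\in\Phi(\hat y)$ and $|y-\hat y|_\infty\le(\alpha^*+\beta^*)/(1-Lh)$. I then split according to whether $\hat y$ has escaped $\bar M$. If $\hat y\in\bar M$, say $\hat y\in\bar C_{m'}$ with $m'\in M$, then the triangle inequality together with the constraint $\beta^*<(1-3Lh)\rho$ yields $|y-m'|_\infty<\frac{2-3Lh}{1-Lh}\rho<2\rho$; since $y,m'$ are grid points this forces $|y-m'|_\infty=\rho$, and as $y\in M^c$ we conclude $m'\in\partial_\rho^0M$, so rounding $\hat y$ onto $m'$ closes this case. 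If instead $\hat y\notin\bar M$, I invoke path-connectedness of the preimage set $(\id+hF)^{-1}(\xi)$ from Proposition~\ref{connected}: a path joining $\hat z\in\bar M$ to $\hat y\in\bar M^c$ inside this set has a first exit point $p\in\partial\bar M$ with $\xi\in\Phi(p)$; by the first grid fact the cube containing $p$ is centred at some $x\in\partial_\rho^0M$ with $|p-x|_\infty\le\rho/2$, and rounding $p$ onto $x$ closes this case.

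The main obstacle is the case $\hat y\in\bar M$: a priori the constructed preimage may lie more than one cell away from $y$ (the bound $(\alpha^*+\beta^*)/(1-Lh)$ can exceed $\rho$), so it is not evident that $\hat y$ detects the boundary at all. The resolution is the quantitative estimate $|y-m'|_\infty<2\rho$, which is exactly where $\beta^*<(1-3Lh)\rho$ enters and why optimal constants are essential: it collapses the geometric ambiguity onto the single admissible grid distance $\rho$. The remaining delicate point is the topological crossing, which relies essentially on Proposition~\ref{connected} together with the fact that interior cubes sit inside $\interior\bar M$, ensuring that the path's first exit from $\bar M$ is pinned to a genuine discrete boundary point.
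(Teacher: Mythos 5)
Your proof is correct and follows essentially the same route as the paper: obtain preimages of $\xi$ near $y$ and near $z$ via Proposition~\ref{solvable} (resp.\ Proposition~\ref{boundary:from:boundary}a)), join them by a path inside $\Phi^{-1}(\xi)$ using Proposition~\ref{connected}, locate a crossing of the discrete boundary region, and round to the grid with the $(1+Lh)$-Lipschitz estimate. The only differences are cosmetic: the paper detects the crossing via the intermediate value theorem applied to $\lambda\mapsto\dist(\phi(\lambda),\interior_\rho M)_\infty$, whereas you use a first-exit point of the cube complex $B_{\rho/2}(M)_\infty$ plus an extra (correctly handled) case when the preimage near $y$ has not left that set.
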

\begin{proof}
By Proposition \ref{solvable} there exists $\tilde y\in\Phi^{-1}(\xi)$ satisfying  
\begin{align*}|\tilde y-y|_\infty \le \frac{\dist(\xi,\Phi(y))_\infty}{1-Lh} \le \frac{\alpha^*+\beta^*}{1-Lh} = \rho.
\end{align*}
For the same reason, there exists $\tilde z\in\Phi^{-1}(\xi)$ with $|\tilde z-z|_\infty\le\rho$.
By Proposition \ref{connected} there exists a continuous $\phi:[0,1]\rightarrow\Phi^{-1}(\xi)$
with $\phi(0)=\tilde y$ and $\phi(1)=\tilde z$. Since $\dist(\phi(0),\interior_\rho M)_\infty\ge\rho$
and $\dist(\phi(1),\interior_\rho M)_\infty\le\rho$, continuity implies the existence of $\lambda^*\in[0,1]$
satisfying $\dist(\phi(\lambda^*),\interior_\rho M)_\infty=\rho$. There exists $x\in\Delta_\rho$ with 
$|\phi(\lambda^*)-x|_\infty\le\frac{\rho}{2}$. Now $\frac12\rho\le\dist(x,\interior_\rho M)_\infty\le\frac32\rho$,
implies $x\in\partial^0M$, and
\[\dist(\xi,\Phi_{\alpha^*}(x))_\infty \le \dist(\Phi(\phi(\lambda^*)),\Phi_{\alpha^*}(x))_\infty =0,\]
so that $\xi\in\Phi_{\alpha^*}(x)$.
\end{proof}

Propositions \ref{boundary}, \ref{layer}, and \ref{intersection} enable the following preliminary boundary Euler algorithm.
A justification is given in Theorem \ref{summary}.

\bigskip

\noindent\fbox{\begin{minipage}{\textwidth}
Assume that the discrete sets $\partial_\rho^0M$ and $\partial_\rho^1M$ are known.
 \begin{itemize}
\item [1.] Compute $\partial_\rho^{-1}M$ and $\partial_\rho^2M$.
\item [2.] Compute the discrete sets 
$S_0:=(\Phi_{\alpha^*,\beta^*}(\partial_\rho^0M)\cup\Phi_{\alpha^*,\beta^*}^\partial(\partial_\rho^{-1}M))\cap\Delta_\rho$ \\
and $S_1:=(\Phi_{\alpha^*,\beta^*}^\partial(\partial_\rho^1M\cup\partial_\rho^2M))\cap\Delta_\rho$.
\item [3.] Compute $\partial_\rho^1(\Phi_{\alpha^*,\beta^*}(M))=\{x\in S_1: \dist(x,S_0)=\rho\}$.
\item [4.] Compute $\partial_\rho^0(\Phi_{\alpha^*,\beta^*}(M))=\{x\in S_0: \dist(x,\partial_\rho^1(\Phi_{\alpha^*,\beta^*}(M)))=\rho\}$.
\end{itemize}
\end{minipage}}

\bigskip

Step 1 is trivial, and step 2 is just the application of the fully discrete Euler scheme.
Steps 3 and 4 can be realized in one search process. As all sets are sparse, they should be stored  
in binary trees rather than arrays of booleans (see \cite{Beyn:Rieger:07}).

\begin{theorem} \label{summary}
The preliminary boundary Euler scheme is well-defined.
\end{theorem}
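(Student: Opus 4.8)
The plan is to verify the two set identities asserted in Steps~3 and~4 of the algorithm, where I abbreviate $R:=\Phi_{\alpha^*,\beta^*}(M)$. Three ingredients feed both identities. First, Propositions~\ref{boundary} and~\ref{layer} directly supply the supersets $\partial_\rho^0R\subseteq S_0$ and $\partial_\rho^1R\subseteq S_1$, since the points they produce satisfy $y\in\Phi_{\alpha^*}^\partial(x)\subseteq\Phi_{\alpha^*,\beta^*}(x)$ for $x\in\partial_\rho^0M$ and $y\in\Phi_{\alpha^*}^\partial(x)\subseteq\Phi_{\alpha^*,\beta^*}^\partial(x)$ for $x\in\partial_\rho^{-1}M$, and they lie on $\Delta_\rho$. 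Second, I would prove $S_0\subseteq R\cap\Delta_\rho$: the contribution of $\partial_\rho^0M$ lies in $R$ because $\partial_\rho^0M\subseteq M$, while for $a\in\partial_\rho^{-1}M\subseteq\interior_\rho M$ all grid neighbours of $a$ lie in $M$, so $\dist(a,\partial(B_{\rho/2}(M)))_\infty\ge\rho$, and Proposition~\ref{boundary:from:boundary}a) together with the parameter choice — which guarantees $\alpha^*+\beta^*\le(1-Lh)\rho$, exactly as exploited in the proof of Proposition~\ref{intersection} — yields $\Phi_{\alpha^*,\beta^*}^\partial(a)\subseteq B_{\alpha^*+\beta^*}(\Phi(a))\subseteq B_{(1-Lh)\rho}(\Phi(a))\subseteq\Phi(B_{\rho/2}(M))\subseteq B_{\alpha^*}(\Phi(M))\subseteq R$ (alternatively via a grid-rounded preimage from Proposition~\ref{solvable}). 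These facts, with the elementary observation that distinct grid points are at $|\cdot|_\infty$-distance at least $\rho$, are all that Steps~3 and~4 require.

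Step~3 carries the weight of the theorem, so I would treat it first, after which Step~4 may use the outer layer it produces. The inclusion $\partial_\rho^1R\subseteq\{x\in S_1:\dist(x,S_0)_\infty=\rho\}$ is routine: $\eta\in\partial_\rho^1R$ lies in $S_1$ by Proposition~\ref{layer}, has a neighbour in $\partial_\rho^0R\subseteq S_0$, and since $\eta\notin R\supseteq S_0$ nothing in $S_0$ is nearer than $\rho$. For the reverse inclusion, take $x\in S_1$ with $\dist(x,S_0)_\infty=\rho$. If $x\notin R$, the witnessing $s\in S_0$ at distance $\rho$ must lie in $\partial_\rho^0R$, because its neighbour $x$ is outside $R$, and hence $x\in\partial_\rho^1R$. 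The hard case, and the step I expect to be the main obstacle, is to exclude $x\in R$. Writing $x\in\Phi_{\alpha^*,\beta^*}(m)$ with $m\in M$: if $m\in\partial_\rho^0M$ then $x\in S_0$ immediately; if $m\in\interior_\rho M$, I would feed the exterior point $w\in\partial_\rho^1M\cup\partial_\rho^2M$ underlying $x\in S_1$ and the interior point $m$ into Proposition~\ref{intersection} to obtain $x'\in\partial_\rho^0M$ with $x\in\Phi_{\alpha^*}(x')\subseteq S_0$. In both subcases $x\in S_0$, so $\dist(x,S_0)_\infty=0\ne\rho$, a contradiction; thus $x\in R$ is impossible. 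The genuine content here is non-combinatorial: Proposition~\ref{intersection} rests on the path-connectedness of the fibre $\Phi^{-1}(x)$ (Proposition~\ref{connected}), which forces any path joining an interior preimage to an exterior preimage to cross $\partial_\rho^0M$. One technical point to confirm is that this survives the use of boundary images, since $\Phi_{\alpha^*,\beta^*}^\partial(w)\subseteq B_{\alpha^*+\beta^*}(\partial\Phi(w))\subseteq B_{\alpha^*+\beta^*}(\Phi(w))$ furnishes exactly the bound $\dist(x,\Phi(w))_\infty\le\alpha^*+\beta^*$ on which the proof of Proposition~\ref{intersection} relies.

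Step~4 is then purely combinatorial and uses that the set it receives equals $\partial_\rho^1R$. If $x\in\partial_\rho^0R$, then $x\in S_0$ by Proposition~\ref{boundary} and a complementary grid neighbour of $x$ lies in $\partial_\rho^1R$, so $\dist(x,\partial_\rho^1R)_\infty=\rho$. Conversely, $x\in S_0$ forces $x\in R\cap\Delta_\rho$; were $x\in\interior_\rho R$, every grid neighbour of $x$ would lie in $R$ and thus avoid $\partial_\rho^1R$, contradicting $\dist(x,\partial_\rho^1R)_\infty=\rho$, so $x\in\partial_\rho^0R$. This closes both identities and shows that the scheme reproduces $\partial_\rho^0R$ and $\partial_\rho^1R$ exactly, hence is well-defined.
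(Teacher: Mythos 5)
Your proof is correct and follows essentially the same route as the paper: Propositions \ref{boundary} and \ref{layer} supply the supersets $S_0$ and $S_1$, Proposition \ref{intersection} eliminates the spurious points of $S_1$ lying inside $\Phi_{\alpha^*,\beta^*}(M)$ (your contrapositive ``if $x\in S_1\cap\Phi_{\alpha^*,\beta^*}(M)$ then $x\in S_0$'' is exactly the paper's $S_1\setminus S_0\subset\Phi_{\alpha^*,\beta^*}(M)^c$), and Step 4 then follows combinatorially from $\partial_\rho^0(\Phi_{\alpha^*,\beta^*}(M))\subset S_0\subset\Phi_{\alpha^*,\beta^*}(M)$. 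You merely spell out two points the paper leaves implicit --- the inclusion $\Phi_{\alpha^*,\beta^*}^\partial(\partial_\rho^{-1}M)\subset\Phi_{\alpha^*,\beta^*}(M)$ and the fact that Proposition \ref{intersection} still applies when $\xi$ only lies in a blown-up boundary image --- and you resolve both correctly.
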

\begin{proof}
By Proposition \ref{boundary} we have 
\begin{equation} \label{tag:3}
\partial_\rho^0(\Phi_{\alpha^*,\beta^*}(M)) \subset S_0 \subset \Phi_{\alpha^*,\beta^*}(M),
\end{equation}
and by Proposition \ref{layer} we have $\partial_\rho^1(\Phi_{\alpha^*,\beta^*}(M)) \subset S_1$,
so that
\[\partial_\rho^1(\Phi_{\alpha^*,\beta^*}(M))\subset\{x\in S_1: \dist(x,S_0)=\rho\}.\] 
Since Proposition \ref{intersection} ensures $S_1\setminus S_0\subset\Phi_{\alpha^*,\beta^*}(M)^c$,
it follows that
\[\partial_\rho^1(\Phi_{\alpha^*,\beta^*}(M))\supset\{x\in S_1: \dist(x,S_0)=\rho\}.\] 
The equality 
\[\partial_\rho^0(\Phi_{\alpha^*,\beta^*}(M))=\{x\in S_0: \dist(x,\partial_\rho^1(\Phi_{\alpha^*,\beta^*}(M)))=\rho\}\]
follows from \eqref{tag:3}.
\end{proof}

In step 2, the full images $\Phi_{\alpha^*,\beta^*}(\partial_\rho^0M)$ have to be computed.
Simple examples indicate that it should be sufficient to work with a version of $\Phi_{\alpha^*}^\partial(\cdot)$ 
that is blown up slightly into the interior of $\Phi_{\alpha^*}(\cdot)$. 
This guess is stated more precisely and analyzed thoroughly in the next section.

\section{Topological considerations}

This whole section is concerned with an improvement of Proposition \ref{intersection}
that requires the computation of full images of $\Phi_{\alpha^*}(\cdot)$. The overall idea is the following:
If $y\in M^c\cap\Delta_\rho$ and $z\in\interior_\rho M$ are points with $\xi\in\Phi_{\alpha^*,\beta^*}^\partial(y)$ 
and $\xi\in\Phi_{\alpha^*,\beta^*}(z)$,
then it is easy to construct two points $v_y,v_z\in\partial_\rho^0M$ such that 
$\dist(\xi,\Phi_{\alpha^*}^\delta(v_y))\le\kappa$ for small $\kappa>0$ and $\xi\in\Phi_{\alpha^*}(v_z)$. 
It is, however, nontrivial to construct a point $v\in\partial_\rho^0M$
satisfying both conditions at once. To this end, the global Leray-Schauder theorem is used to join $v_y$ and $v_z$ 
by a compact connected set $C$ in a suitable way, and we prove the existence of a $v\in C\cap\partial_\rho^0M$ with the desired properties. 

At first, we introduce topological notions on grid sets and prove that the images of the Euler scheme are connected in a certain sense.
Then we prepare the original problem for the application of the Leray-Schauder theorem and draw the necessary conclusions.

\begin{definition}
Let $x,\tilde x\in\Delta_\rho$
A sequence $c=(c_n)_{n=0}^N\subset\Delta_\rho$ with $N\in\N$ is called a chain connecting $x$ and $\tilde x$ 
if $N=0$ and $c_0=x=\tilde x$ or $N>0$, $c_0=x$, $c_N=\tilde x$, and
\[|c_{n+1}-c_n|_\infty = \rho,\quad n=0,\ldots,N-1.\]
The collection of all chains connecting $x$ and $\tilde x$ will be called $\chain(x,\tilde x)$.

Let $x,\tilde x,\hat x\in\Delta_\rho$, $c=(c_n)_{n=0}^N\in\chain(x,\tilde x)$, and $\tilde c=(\tilde c_n)_{n=0}^{\tilde N}\in\chain(\tilde x,\hat x)$.
Then the concatenation of $c$ and $\tilde c$ is defined by
\[\tilde c\circ c:=(x,c_1,\ldots,c_{N-1},\tilde x,\tilde c_1,\ldots,\tilde c_{\tilde N-1},\hat x)\in\chain(x,\hat x).\]

A set $M\subset\Delta_\rho$ is called chain-connected if for any two $x,\tilde x\in M$ there exists a chain $c\in\chain(x,\tilde x)$
such that $c\subset M$.
\end{definition}

\subsection{Chain-connectedness of reachable sets}

The following results show that $\Phi_{\alpha^*}(M)\cap\Delta_\rho$ is chain-connected whenever $M$ has this property.
This means that all reachable sets of the Euler scheme are chain-connected if we require the initial
set $X_0$ to be chain-connected. This will henceforth be assumed.

\begin{lemma} \label{existence:of:grid:point}
Let $x\in\R^d$ and $z\in B_{\rho/2}(x)_\infty$. Then there exists $\eta\in B_{\rho/2}(x)\cap\Delta_\rho$
such that $|z-\eta|_\infty<\rho$.
\end{lemma}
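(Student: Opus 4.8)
The plan is to reduce everything to a one-dimensional, coordinatewise construction, which is legitimate because both the membership $\eta\in B_{\rho/2}(x)_\infty$ and the target estimate $|z-\eta|_\infty<\rho$ are expressed through the maximum norm. Writing $x=(x_1,\dots,x_d)$ and $z=(z_1,\dots,z_d)$, it suffices to produce for each index $i$ a grid value $\eta_i\in\rho\Z$ with $|x_i-\eta_i|\le\rho/2$ and $|z_i-\eta_i|<\rho$: assembling $\eta:=(\eta_1,\dots,\eta_d)$ gives a point of $\Delta_\rho=\rho\Z^d$ with $|x-\eta|_\infty\le\rho/2$, and since $|z-\eta|_\infty$ is the maximum over the finitely many coordinates of quantities each strictly below $\rho$, the strict bound $|z-\eta|_\infty<\rho$ is preserved.

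The subtlety --- and the only real difficulty --- lies in securing the strict inequality. Rounding $x_i$ to the nearest multiple of $\rho$ gives $|x_i-\eta_i|\le\rho/2$, but the triangle inequality then yields only $|z_i-\eta_i|\le|z_i-x_i|+|x_i-\eta_i|\le\rho/2+\rho/2=\rho$, which need not be strict. Equality can genuinely occur in the boundary situation where $x_i$ sits at a cell midpoint, so that both neighbouring grid points realize $|x_i-\eta_i|=\rho/2$, while simultaneously $|z_i-x_i|=\rho/2$ with $z_i$ lying on the side of $x_i$ opposite to the chosen grid point. The remedy I would use is to break this rounding tie in the direction of $z_i$.

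Concretely, I would let $\eta_i$ be the unique multiple of $\rho$ contained in the half-open interval $J_i$ of length $\rho$ centred at $x_i$, closed on the side facing $z_i$ and open on the far side, that is $J_i=(x_i-\tfrac{\rho}{2},\,x_i+\tfrac{\rho}{2}]$ when $z_i\ge x_i$ and $J_i=[x_i-\tfrac{\rho}{2},\,x_i+\tfrac{\rho}{2})$ when $z_i<x_i$. The elementary fact to invoke is that every half-open real interval of length $\rho$ meets $\rho\Z$ in exactly one point, so $\eta_i$ is well defined, and since $\eta_i$ lies in the closure $[x_i-\tfrac{\rho}{2},x_i+\tfrac{\rho}{2}]$ the bound $|x_i-\eta_i|\le\rho/2$ is immediate. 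It then remains to check $|z_i-\eta_i|<\rho$ by a two-sided estimate using $|z_i-x_i|\le\rho/2$: for $z_i\ge x_i$ one has $\eta_i-z_i\le(x_i+\tfrac{\rho}{2})-x_i=\tfrac{\rho}{2}<\rho$ and $z_i-\eta_i<(x_i+\tfrac{\rho}{2})-(x_i-\tfrac{\rho}{2})=\rho$, the strictness of the second bound coming precisely from the open endpoint of $J_i$; the case $z_i<x_i$ is symmetric. I expect the tie-breaking device to be the whole content of the argument --- once the rounding direction is coupled to $z$, the remaining estimates are routine.
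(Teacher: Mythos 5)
Your proof is correct and follows essentially the same coordinatewise strategy as the paper's: the paper rounds $z$ to a nearest grid point $\xi$ and then shifts any coordinate with $|\xi_n-x_n|>\rho/2$ by one grid step toward $x_n$, which produces the same point as your half-open-window rule with ties broken toward $z$. Both arguments rest on the same observation that the only obstruction is the $\rho/2+\rho/2=\rho$ equality case, eliminated by coupling the rounding direction to the data.
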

\begin{proof}
There exists $\xi\in\Delta_\rho$ such that $|\xi-z|_\infty\le\rho/2$. The point $\eta\in\R^d$ defined by
\[\eta_n:=\left\{\begin{array}{ll}\xi_n, & |\xi_n-x_n|\le\rho/2\\ \xi_n-\text{sign}(\xi_n-x_n)\rho,&|\xi_n-x_n|>\rho/2\end{array}\right.\]
has the desired properties: If $|\xi_n-x_n|\le\rho/2$, then $|\eta_n-z_n|=|\xi_n-z_n|\le\rho/2$.
If $\xi_n-x_n > \rho/2$, then $\xi_n>z_n>x_n$, and $|z_n-\eta_n| = z_n-(\xi_n-\rho) < \rho$.
The case $x_n-\xi_n > \rho/2$ is symmetric.
\end{proof}

The fact that individual images $\Phi_{\alpha^*}(x)\cap\Delta_\rho$ of points $x\in M$ are chain connected
will enable us to show in Proposition \ref{image:chain:connected} that their union is connected as well.
\begin{lemma} \label{single:image:connected}
The set $\Phi_{\alpha^*}(x)\cap\Delta_\rho$ is nonempty and chain-connected for any $x\in\R^d$.
\end{lemma}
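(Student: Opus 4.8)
The plan is to work with the convex compact core $P:=\Phi(x)=x+hF(x)$ and to use that $K:=\Phi_{\alpha^*}(x)=B_{\alpha^*}(P)_\infty$ is again convex and compact, where $\alpha^*=(1+Lh)\rho/2\ge\rho/2$. Let $R:\R^d\to\Delta_\rho$ denote componentwise rounding to the nearest multiple of $\rho$ (ties broken, say, upward), so that $|R(p)-p|_\infty\le\rho/2$ for every $p$ and $R(a)=a$ for $a\in\Delta_\rho$. Nonemptiness is then immediate: for any $p\in P$ we have $\dist(R(p),P)_\infty\le\rho/2\le\alpha^*$, hence $R(p)\in K\cap\Delta_\rho$. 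The engine of the argument is the elementary fact that rounding a segment yields a chain: if $\phi(\lambda)=(1-\lambda)u+\lambda w$, then $\lambda\mapsto R(\phi(\lambda))$ is piecewise constant, and at each of its finitely many jumps exactly those coordinates whose value crosses a half-grid threshold change by $\pm\rho$; since a non-constant linear coordinate meets each threshold once, successive distinct values of $R(\phi(\cdot))$ have $|\cdot|_\infty$-distance exactly $\rho$ and form a chain from $R(u)$ to $R(w)$.

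Given two grid points $a,b\in K$, I would connect them along the polyline $a\to p_a\to p_b\to b$, where $p_a,p_b\in P$ are nearest points of $P$ to $a,b$ in $|\cdot|_\infty$, and round each of the three segments with $R$. The chains produced on consecutive segments share the endpoints $R(p_a)$ and $R(p_b)$, so their concatenation is a single chain from $a=R(a)$ to $b=R(b)$; it remains only to check that each rounded point lies in $K$, i.e.\ within $|\cdot|_\infty$-distance $\alpha^*$ of $P$. On the middle segment this is clear, since $[p_a,p_b]\subset P$ by convexity and the rounded points then lie within $\rho/2\le\alpha^*$ of $P$.

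The delicate part, which I expect to be the main obstacle, is the first segment $q_\lambda:=(1-\lambda)a+\lambda p_a$ (the third being symmetric), because naively $\dist(R(q_\lambda),P)_\infty$ could reach $\alpha^*+\rho/2>\alpha^*$ near the endpoint $a$. The resolution hinges on the exact value of $\alpha^*$. With $\delta:=\dist(a,P)_\infty=|a-p_a|_\infty\le\alpha^*$, convexity gives $\dist(q_\lambda,P)_\infty\le(1-\lambda)\delta$, while $R(q_\lambda)$ cannot leave $a$ before the parameter $\lambda_1=\rho/(2\delta)$, since every coordinate displacement $\lambda|a_i-p_{a,i}|$ stays below $\rho/2$ until then. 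As $\delta\le\alpha^*=(1+Lh)\rho/2$, we get $\lambda_1\ge1/(1+Lh)=\rho/(2\alpha^*)$; hence for $\lambda<\lambda_1$ the rounded point equals $a\in K$, and for $\lambda\ge\lambda_1$
\[\dist(R(q_\lambda),P)_\infty\le(1-\lambda)\delta+\tfrac{\rho}{2}\le\Big(1-\tfrac{\rho}{2\alpha^*}\Big)\alpha^*+\tfrac{\rho}{2}=\alpha^*,\]
so $R(q_\lambda)\in K$ throughout. It is precisely the tuning $\rho/(2\alpha^*)=1/(1+Lh)$ that makes the two regimes meet, so a larger rounding radius or a smaller $\alpha^*$ would break the estimate; this is where the care over optimal constants pays off. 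The degenerate cases $\delta\le\rho/2$ (then $R$ never leaves $a$ and $R(p_a)=a$) and $a=b$ are trivial.
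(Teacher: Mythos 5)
Your proof is correct, but it follows a genuinely different route from the paper's. The paper argues by contradiction and non\-constructively: it splits the grid points of $\Phi_{\alpha^*}(x)$ into the chain-component of $y$, that of $\tilde y$, and the remainder, shows via Lemma \ref{existence:of:grid:point} that the open $\rho$-neighbourhoods of these three sets are pairwise disjoint yet cover $\Phi_{\alpha^*}(x)$, and thus contradicts the connectedness of the convex set $\Phi_{\alpha^*}(x)$. You instead build the chain explicitly by componentwise rounding of the polyline $a\to p_a\to p_b\to b$ through the convex core $P=\Phi(x)$, using that a rounded segment produces consecutive grid points at $|\cdot|_\infty$-distance exactly $\rho$ (even when several coordinates cross half-grid thresholds at the same parameter, the sup-norm jump is still $\rho$, so the chain condition holds). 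Your approach is constructive and algorithmically realizable, at the price of the more delicate endpoint estimate on the segment $[a,p_a]$; the paper's approach avoids all metric bookkeeping but yields no explicit chain. Two small remarks: your closing claim that the argument hinges on the exact tuning $\rho/(2\alpha^*)=1/(1+Lh)$ is overstated --- once $R(q_\lambda)\neq a$ forces $\lambda\ge\rho/(2\delta)$, the sharper bound $(1-\lambda)\delta+\rho/2\le\delta\le\alpha^*$ closes the estimate for \emph{any} $\alpha^*\ge\rho/2$, so no fine tuning is needed. Also, with ties broken upward the degenerate case $\delta=\rho/2$ can produce $R(p_a)\neq a$, but the resulting one-step chain is still valid and its point still lies within $\rho/2$ of $P$, so nothing breaks.
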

\begin{proof}
Since $\alpha^*\ge\rho/2$, it follows that $\Phi_{\alpha^*}(x)\cap\Delta_\rho\neq\emptyset$.

Assume that there exist $y,\tilde y\in\Phi_{\alpha^*}(x)\cap\Delta_\rho$ such that 
$\{c\in\chain(y,\tilde y): c\subset\Phi_{\alpha^*}(x)\}=\emptyset$. 
Consider the sets 
$K:=\{z\in\Phi_{\alpha^*}(x)\cap\Delta_\rho: \exists c\in\chain(y,z), c\subset\Phi_{\alpha^*}(x)\}$,
$\tilde K:=\{z\in\Phi_{\alpha^*}(x)\cap\Delta_\rho: \exists c\in\chain(\tilde y,z), c\subset\Phi_{\alpha^*}(x)\}$,
and $\hat K:=(\Phi_{\alpha^*}(x)\cap\Delta_\rho)\setminus(K\cup\tilde K)$. 

By construction, the sets $\interior B_{\rho}(K)_\infty$, $\interior B_{\rho}(\tilde K)_\infty$, 
and $\interior B_{\rho}(\hat K)_\infty$ are pairwise disjoint: 
If there exists $\eta\in\interior B_{\rho}(K)_\infty\cap\interior B_{\rho}(\tilde K)_\infty$,
then there exist points $\xi\in K$ and $\tilde\xi\in\tilde K$ such that $|\xi-\tilde\xi|_\infty<2\rho$.
As $\xi,\tilde\xi\in\Delta_\rho$, $|\xi-\tilde\xi|_\infty\le\rho$ ensues. By definition, there exist
$c\in\chain(y,\xi)$ and $\tilde c\in\chain(\tilde\xi,\tilde y)$ with $c,\tilde c\subset\Phi_{\alpha^*}(x)$.
But then $\tilde c\circ(\xi,\tilde\xi)\circ c\subset\Phi_{\alpha^*}(x)$ connects $y$ and $\tilde y$, which is a contradiction.
A similar argument shows that $\interior B_{\rho}(K)_\infty\cap\interior B_{\rho}(\hat K)_\infty=\emptyset$
and $\interior B_{\rho}(\tilde K)_\infty\cap\interior B_{\rho}(\hat K)_\infty=\emptyset$.

Let $z\in\Phi_{\alpha^*}(x)$. Then there exists some $v\in\R^d$ such that $z\in B_{\rho/2}(v)_\infty\subset\Phi_{\alpha^*}(x)$.
By Lemma \ref{existence:of:grid:point}, there exists $\eta\in B_{\rho/2}(v)_\infty\cap\Delta_\rho\subset\Phi_{\alpha^*}(x)\cap\Delta_\rho$
such that $|z-\eta|_\infty<\rho$. Therefore,
$\Phi_{\alpha^*}(x) \subset \interior B_{\rho}(K)_\infty \cup \interior B_{\rho}(\tilde K)_\infty \cup \interior B_{\rho}(\hat K)_\infty$, so that $\Phi_{\alpha^*}(x)$ is not connected, which contradicts convexity of $\Phi_{\alpha^*}(x)$.
\end{proof}

The following result prepares the application of the global Leray-Schauder theorem in Proposition \ref{Leray:Schauder}.

\begin{proposition} \label{image:chain:connected}
If $M\subset\Delta_\rho$ is chain-connected, then $\Phi_{\alpha^*}(M)\cap\Delta_\rho$ is chain-connected.
\end{proposition}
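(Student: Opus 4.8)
The plan is to reduce the statement to the case of two neighbouring grid points and then exploit Lemma \ref{single:image:connected}. Given $y,\tilde y\in\Phi_{\alpha^*}(M)\cap\Delta_\rho$, I would choose $m_0,m_K\in M$ with $y\in\Phi_{\alpha^*}(m_0)$ and $\tilde y\in\Phi_{\alpha^*}(m_K)$, and use chain-connectedness of $M$ to obtain a chain $(m_0,\ldots,m_K)\subset M$ with $|m_{n+1}-m_n|_\infty=\rho$. The whole problem then splits into joining $\Phi_{\alpha^*}(m_n)\cap\Delta_\rho$ to $\Phi_{\alpha^*}(m_{n+1})\cap\Delta_\rho$ by a chain that stays inside $\Phi_{\alpha^*}(\{m_n,m_{n+1}\})\cap\Delta_\rho\subset\Phi_{\alpha^*}(M)\cap\Delta_\rho$. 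Since each single image $\Phi_{\alpha^*}(m_n)\cap\Delta_\rho$ is chain-connected by Lemma \ref{single:image:connected}, it suffices to prove that the grid points of the union $U_n:=\Phi_{\alpha^*}(m_n)\cup\Phi_{\alpha^*}(m_{n+1})$ form a chain-connected set.

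First I would show that the two blowups overlap. The estimate $\dist_H(\Phi(x),\Phi(\tilde x))_\infty\le(1+Lh)|x-\tilde x|_\infty$, which follows directly from $\Phi=\id+hF$ and $L$-Lipschitz continuity of $F$, gives $\dist_H(\Phi(m_n),\Phi(m_{n+1}))_\infty\le(1+Lh)\rho=2\alpha^*$ for neighbouring grid points. Picking $p\in\Phi(m_n)$ and a nearest $p'\in\Phi(m_{n+1})$ with $|p-p'|_\infty\le 2\alpha^*$, the midpoint $w:=(p+p')/2$ satisfies $|w-p|_\infty\le\alpha^*$ and $|w-p'|_\infty\le\alpha^*$, hence $w\in\Phi_{\alpha^*}(m_n)\cap\Phi_{\alpha^*}(m_{n+1})$. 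Thus $U_n$ is a union of two intersecting convex sets and is therefore connected. This is the step where the specific value $\alpha^*=(1+Lh)\rho/2$ is essential: a smaller blowup would not guarantee the overlap.

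Then I would rerun the covering argument of Lemma \ref{single:image:connected} with $\Phi_{\alpha^*}(x)$ replaced by $U_n$. The only two properties of $\Phi_{\alpha^*}(x)$ used in that proof are connectedness, there supplied by convexity, and the interior-cube property that every $z$ in the set lies in some $B_{\rho/2}(v)_\infty$ contained in the set. Connectedness of $U_n$ has just been established, and the interior-cube property carries over because every $z\in U_n$ lies in one of the two blowups and $\alpha^*\ge\rho/2$ still yields a cube $B_{\rho/2}(v)_\infty$ inside that blowup and hence inside $U_n$, from which Lemma \ref{existence:of:grid:point} produces a nearby grid point. Consequently $U_n\cap\Delta_\rho$ is chain-connected.

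Finally I would assemble the global chain. Fixing grid points $q_n\in\Phi_{\alpha^*}(m_n)\cap\Delta_\rho$ (nonempty by Lemma \ref{single:image:connected}), I would connect $q_n$ to $q_{n+1}$ inside $U_n\cap\Delta_\rho$, connect $y$ to $q_0$ inside $\Phi_{\alpha^*}(m_0)\cap\Delta_\rho$ and $q_K$ to $\tilde y$ inside $\Phi_{\alpha^*}(m_K)\cap\Delta_\rho$, and concatenate all these chains. Each piece lies in $\Phi_{\alpha^*}(M)\cap\Delta_\rho$, so the concatenation is the desired chain. I expect the main obstacle to be the third step, namely verifying that the covering and connectedness argument of Lemma \ref{single:image:connected} genuinely survives the replacement of a convex set by the non-convex union $U_n$: the overlap computation guarantees connectedness, but one must check that no further use was made of convexity.
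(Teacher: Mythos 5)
Your proof is correct, and its skeleton is the same as the paper's: reduce to a pair of adjacent grid points $m_n,m_{n+1}$ via a chain in $M$, and establish that $\Phi_{\alpha^*}(m_n)$ and $\Phi_{\alpha^*}(m_{n+1})$ overlap by taking a Hausdorff-nearest pair $p\in\Phi(m_n)$, $p'\in\Phi(m_{n+1})$ with $|p-p'|_\infty\le(1+Lh)\rho=2\alpha^*$ and observing that the midpoint lies in both blowups --- this is exactly the paper's $z^*=(\eta+\tilde\eta)/2$ computation, and it is indeed the step where the precise value of $\alpha^*$ enters. The two arguments part ways only at the end. The paper stays with Lemma \ref{single:image:connected} as a black box: it uses Lemma \ref{existence:of:grid:point} to place grid points $\xi\in\Phi_{\alpha^*}(m_n)\cap\Delta_\rho$ and $\tilde\xi\in\Phi_{\alpha^*}(m_{n+1})\cap\Delta_\rho$ within distance $\rho$ of the overlap point (hence $|\xi-\tilde\xi|_\infty\le\rho$), and splices two single-image chains through the elementary step $(\xi,\tilde\xi)$. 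You instead re-run the separation argument of Lemma \ref{single:image:connected} on the union $U_n$, replacing convexity by the connectedness you just proved; your check that the only other ingredient is the interior-cube property, which each point of $U_n$ inherits from whichever blowup contains it because $\alpha^*\ge\rho/2$, is the right one, and the $K,\tilde K,\hat K$ construction goes through verbatim for the non-convex $U_n$. What each route buys: the paper's splicing keeps Lemma \ref{single:image:connected} untouched but needs the slightly delicate grid-point placement of Lemma \ref{existence:of:grid:point} near $z^*$; yours avoids that placement at the cost of verifying that the covering lemma survives without convexity. Both are complete proofs.
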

\begin{proof}
Let $x,\tilde x\in M$ with $|x-\tilde x|_\infty\le\rho$, $y\in\Phi_{\alpha^*}(x)\cap\Delta_\rho$, and 
$\tilde y\in\Phi_{\alpha^*}(\tilde x)\cap\Delta_\rho$. Moreover, let $\eta\in\Phi(x)$.
Then there exists $\tilde\eta\in\Phi(\tilde x)$ such that $|\eta-\tilde\eta|_\infty\le(1+Lh)\rho$, and hence 
$z^*:=(\eta+\tilde\eta)/2\in\Phi_{\alpha^*}(x)\cap\Phi_{\alpha^*}(\tilde x)$. 

According to Lemma \ref{existence:of:grid:point}, there exists 
$\xi\in B_{\alpha^*}(\eta)_\infty\cap\Delta_\rho\subset\Phi_{\alpha^*}(x)\cap\Delta_\rho$ 
such that $|\xi-z^*|_\infty<\rho$. By the same argument, there exists 
$\tilde\xi\in B_{\alpha^*}(\tilde z)_\infty\cap\Delta_\rho\subset\Phi_{\alpha^*}(\tilde x)\cap\Delta_\rho$ 
such that $|\tilde\xi-z^*|_\infty<\rho$. In particular, $|\xi-\tilde\xi|_\infty\le|\xi-z^*|_\infty+|z^*-\tilde\xi|_\infty<2\rho$  
forces $|\xi-\tilde\xi|_\infty\le\rho$, because $\xi,\tilde\xi\in\Delta_\rho$.

By Lemma \ref{single:image:connected}, there exist $c\in\chain(y,\xi)$ and $\tilde c\in\chain(\tilde\xi,\tilde y)$
with $c\subset\Phi_{\alpha^*}(x)\cap\Delta_\rho$ and $\tilde c\subset\Phi_{\alpha^*}(\tilde x)\cap\Delta_\rho$. 
But then $\hat c:=\tilde c\circ(\xi,\tilde\xi)\circ c\in\chain(y,\tilde y)$
and $\hat c\subset(\Phi_{\alpha^*}(x)\cup\Phi_{\alpha^*}(\tilde x))\cap\Delta_\rho$, 
so that $(\Phi_{\alpha^*}(x)\cup\Phi_{\alpha^*}(\tilde x))\cap\Delta_\rho$ is chain-connected.

Now let $y_*\in\Phi_{\alpha^*}(M)\cap\Delta_\rho$ and $y^*\in\Phi_{\alpha^*}(M)\cap\Delta_\rho$ be arbitrary.
There exist points $x_*,x^*\in M$ and $\bar c=(c_n)_{n=0}^N\in\chain(x_*,x^*)$ with $\bar c\subset M$ such that
$y_*\in\Phi_{\alpha^*}(x_*)\cap\Delta_\rho$ and $y^*\in\Phi_{\alpha^*}(x^*)\cap\Delta_\rho$. Repeated application
of the above argument yields that $(\cup_{n=0}^N\Phi_{\alpha^*}(\bar c_n))\cap\Delta_\rho$ is chain-connected,
and, in particular, there exists a chain in $\Phi_{\alpha^*}(M)\cap\Delta_\rho$ connecting $y_*$ and $y^*$. 
\end{proof}

\subsection{Application of the global Leray-Schauder Theorem}

As the discrete chain-connected set $M\subset\Delta_\rho$ is not accessible for the global Leray-Schauder Theorem, we use
a modified version of $M$. 

\begin{lemma} \label{properties:M:hat}
For any $\gamma^*>0$ with $\rho/2<\gamma^*<\rho$, the set $\hat M := B_{\gamma^*}(M)_\infty$ is compact and satisfies
\[\dist_H(\partial\hat{M},\partial_\rho^0M)_\infty \le \gamma^*.\]
Moreover, it is strongly path-connected in the sense that for any $z,\tilde z\in\hat M$ there exists $\phi\in C([0,1],\R^d)$ 
with $\phi(0)=z$, $\phi(1)=\tilde{z}$, and $\phi((0,1))\subset\interior\hat{M}$.
\end{lemma}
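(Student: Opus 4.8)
The plan is to treat the three assertions separately, the compactness being immediate and the Hausdorff estimate being the technical core. Throughout I would work with the continuous function $f(x):=\dist(x,M)_\infty$, noting that $\hat M=\{x\in\R^d:f(x)\le\gamma^*\}$ is closed and bounded, hence compact, that $\{f<\gamma^*\}\subseteq\interior\hat M$, and consequently that $\partial\hat M\subseteq\{f=\gamma^*\}$. Since $M\subset\Delta_\rho$ is a finite set of grid points, every competitor for the nearest-point distance has the form $a+\rho w$ with $w\in\Z^d$, so the whole argument reduces to elementary integer arithmetic exploiting $\rho/2<\gamma^*<\rho$.

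For the bound $\dist(\partial_\rho^0M,\partial\hat M)_\infty\le\gamma^*$ I would fix $a\in\partial_\rho^0M$ together with a grid neighbour $x=a+\rho v\notin M$, where $v\in\{-1,0,1\}^d$ with $|v|_\infty=1$, and propose the explicit candidate $q:=a+\gamma^* v$, which satisfies $|q-a|_\infty=\gamma^*$. The claim is $q\in\partial\hat M$. To establish both $f(q)=\gamma^*$ and that $q$ is genuinely a boundary point, I would examine, for each $m=a+\rho w\in M$, the quantity $\max_i|\gamma^* v_i-\rho w_i|$, and likewise $\max_i|(\gamma^*+\epsilon)v_i-\rho w_i|$ for the perturbed point $q+\epsilon v$. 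The key computation is that $\rho/2<\gamma^*<\rho$ forces $w=v$ whenever either quantity drops below $\gamma^*$; since $x=a+\rho v\notin M$, no point of $M$ can be strictly closer than $\gamma^*$, so $f(q)=\gamma^*$, while $f(q+\epsilon v)>\gamma^*$ for small $\epsilon>0$ places $q+\epsilon v$ outside $\hat M$, whence $q\in\partial\hat M$. I expect this step to be the main obstacle: ruling out that $q$ is merely a point with $f=\gamma^*$ interior to $\hat M$ requires the perturbation analysis of $q+\epsilon v$, which hinges delicately on the hypothesis $\rho/2<\gamma^*<\rho$.

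For the reverse bound $\dist(\partial\hat M,\partial_\rho^0M)_\infty\le\gamma^*$ I would take $p\in\partial\hat M$, so $f(p)=\gamma^*$, and let $m\in M$ be a nearest point, setting $u:=p-m$ with $|u|_\infty=\gamma^*$. It suffices to show $m\in\partial_\rho^0M$, for then $|p-m|_\infty=\gamma^*$ gives the estimate. Arguing by contradiction, if $m\in\interior_\rho M$ then every grid neighbour of $m$ lies in $M$; I would choose the neighbour $m+\rho w$ with $w_i=\sign(u_i)$ on exactly those coordinates with $|u_i|>\rho/2$ and $w_i=0$ otherwise. Because $|u|_\infty=\gamma^*>\rho/2$ this $w$ is nonzero, so $m+\rho w\in M$, and a coordinatewise check (using $\gamma^*<\rho$ in the moved coordinates and $\gamma^*>\rho/2$ in the untouched ones) shows $|p-(m+\rho w)|_\infty<\gamma^*$, contradicting $f(p)=\gamma^*$. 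Hence the nearest point lies in $\partial_\rho^0M$.

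Finally, for strong path-connectedness I would, given $z,\tilde z\in\hat M$, choose centres $m_z,m_{\tilde z}\in M$ with $|z-m_z|_\infty\le\gamma^*$ and $|\tilde z-m_{\tilde z}|_\infty\le\gamma^*$, invoke chain-connectedness of $M$ to obtain a chain $m_z=c_0,\dots,c_N=m_{\tilde z}$ in $M$, and build $\phi$ by concatenating the spoke from $z$ to $m_z$, the polygonal path through the centres $c_0,\dots,c_N$, and the spoke from $m_{\tilde z}$ to $\tilde z$. Every interior point of a spoke lies at distance $(1-t)\gamma^*<\gamma^*$ from its centre, and every point of a segment joining adjacent centres lies at distance $\le\rho/2<\gamma^*$ from $M$; hence $\phi((0,1))\subseteq\{f<\gamma^*\}\subseteq\interior\hat M$, while the endpoints $\phi(0)=z$ and $\phi(1)=\tilde z$ are permitted to lie on $\partial\hat M$.
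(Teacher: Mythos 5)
Your proof is correct and follows essentially the same route as the paper: your explicit boundary candidate $q=a+\gamma^* v$ is exactly the paper's point $\tilde z=\frac{\gamma^*}{\rho}z+\frac{\rho-\gamma^*}{\rho}x$, and the spoke-plus-chain construction for strong path-connectedness is identical. The only difference is cosmetic: you verify the Hausdorff bounds by coordinatewise integer arithmetic and a nearest-point perturbation, where the paper uses the containments $B_\rho(\interior_\rho M)_\infty\subset\hat M$ and $\interior B_{\rho-\gamma^*}(z)_\infty\subset\hat M^c$; both arguments are sound.
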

\begin{proof}
The set $\hat M$ is closed and bounded and hence compact. 

\medskip

\emph{Distance between boundaries.} 
For any $z\in B_\rho(\interior_\rho M)$, there exists some $x\in M$ such that $z\in B_{\rho/2}(x)\subset\hat M$,
so that $B_\rho(\interior_\rho M) \subset \hat M$.
Now let $z\in\partial\hat M$. There exists $x\in M$ with $|z-x|_\infty\le\gamma^*<\rho$, and by the above,
we have $x\notin\interior_\rho M$, so that $x\in\partial_\rho^0 M$, and hence 
\[\dist(\partial\hat{M},\partial_\rho^0M)_\infty \le \gamma^*.\]

Now let $x\in\partial_\rho^0M$. There exists $z\in M^c\cap\Delta_\rho$ with $|z-x|_\infty=\rho$ and $|z-\tilde x|_\infty\ge\rho$
for all $\tilde x\in M$. Hence $\interior B_{\rho-\gamma^*}(z)_\infty \subset \hat{M}^c$, and since $B_{\gamma^*}(x)\subset\hat{M}$,
we have $\tilde z:=\frac{\gamma^*}{\rho}z + \frac{\rho-\gamma^*}{\rho}x\in\partial\hat{M}$ and $|\tilde z-x|_\infty=\gamma^*$. Thus
\[\dist(\partial_\rho^0M,\partial\hat{M})_\infty \le \gamma^*.\]

\emph{Strong path-connectedness.} Let $z,\tilde z\in\hat M$. By definition of $\hat M$, there exist
$x,\tilde x\in M$ with $|z-x|_\infty\le\gamma^*$ and $|\tilde z-\tilde x|_\infty\le\gamma^*$.
But then the paths $\phi\in C([0,1],\R^d)$ and 
$\tilde\phi\in C([0,1],\R^d)$ given by
$\phi(\lambda):=\lambda x+(1-\lambda) z$ and $\tilde\phi(\lambda):=\lambda\tilde z+(1-\lambda) \tilde x$
satisfy $\phi([0,1])\subset B_{\gamma^*}(x)_\infty\subset \hat M$, $\phi(0)=z$, $\phi(1)=x$, 
$\tilde\phi([0,1])\subset B_{\gamma^*}(\tilde x)_\infty\subset \hat M$,
$\tilde\phi(0)=\tilde x$, $\tilde\phi(1)=\tilde z$ and
$\phi((0,1)),\tilde\phi((0,1))\subset\interior \hat M$.

As $M$ is chain-connected, there exists a chain $(c_n)_{n=0}^N\subset M$ with $c_0=x$ and $c_N=\tilde x$.
Then $\phi_n\in C([0,1],\R^d)$  given by $\phi_n(\lambda):=\lambda c_{n+1} + (1-\lambda)c_n$
satisfies $\phi_n([0,1])\subset\interior\hat{M}$,
$\phi_n(0)=c_n$, and $\phi_n(1)=c_{n+1}$, and hence the path
\[\hat\phi:=\tilde\phi\circ\phi_{N-1}\circ\phi_{N-2}\circ\ldots\circ\phi_1\circ\phi_0\circ\phi\in C([0,1],\R^d)\]
has the desired properties.
\end{proof}

The following proposition allows us to work essentially with the boundary of the 
individual images of the Euler scheme.

\begin{proposition} \label{Leray:Schauder}
Let $y\in M^c\cap\Delta_\rho$ and $z\in\interior_\rho M$ be such that $\xi\in\Phi_{\alpha^*,\beta^*}^\partial(y)$ and $\xi\in\Phi_{\alpha^*,\beta^*}(z)$.
Then there exists some $\tilde w\in\partial_\rho^0 M$ such that $\xi\in\Phi_{\alpha^*}(\tilde w)$ and 
$\dist(\xi,\Phi_{\alpha^*}^\partial(\tilde w))_\infty\le\hat\kappa(h,\rho,\beta^*)$ with
\[\hat\kappa(h,\rho,\beta^*):=\frac{2+2Lh}{1-Lh}\alpha^* + \frac{3+Lh}{1-Lh}\beta^* + (1+Lh)\dist(y,M)_\infty.\]
\end{proposition}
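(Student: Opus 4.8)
The plan is to prove Proposition \ref{Leray:Schauder} by using the global Leray--Schauder theorem to join, inside a suitable connected set, a point $v_y$ near the boundary whose image boundary nearly contains $\xi$, and a point $v_z$ whose full image contains $\xi$, and then extract a single grid point $\tilde w\in\partial_\rho^0M$ enjoying both properties simultaneously. First I would produce the two ``easy'' points separately. Since $\xi\in\Phi_{\alpha^*,\beta^*}^\partial(y)$, Proposition \ref{solvable} yields a preimage $\tilde y$ with $\xi\in\partial\Phi(\tilde y)$ and a quantitative bound on $|\tilde y - y|_\infty$ in terms of $\alpha^*+\beta^*$ and $\dist(y,M)_\infty$; rounding $\tilde y$ to a nearby grid point (as in Lemma \ref{existence:of:grid:point}) and using that $\partial\Phi(\cdot)$ is $Lh$-Lipschitz gives a $v_y\in\partial_\rho^0M$ with $\dist(\xi,\Phi_{\alpha^*}^\partial(v_y))_\infty$ small. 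Dually, $\xi\in\Phi_{\alpha^*,\beta^*}(z)$ and the same machinery produce a $v_z$ with $\xi\in\Phi_{\alpha^*}(v_z)$; the constant $\hat\kappa(h,\rho,\beta^*)$ is exactly what the triangle inequalities in these two constructions accumulate.

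The heart of the argument is to bridge $v_y$ and $v_z$. The plan is to apply the global Leray--Schauder theorem to the map $x\mapsto x+G_\xi(x)$, where $G_\xi$ is a selection-type homotopy built from $\Phi^{-1}(\xi)$ on the modified domain $\hat M=B_{\gamma^*}(M)_\infty$ from Lemma \ref{properties:M:hat}. By Proposition \ref{connected} the fiber $(\id+G)^{-1}(\xi)=\Phi^{-1}(\xi)$ is path-connected, and by Proposition \ref{image:chain:connected} the discrete image is chain-connected; the strong path-connectedness of $\hat M$ guaranteed by Lemma \ref{properties:M:hat} lets me connect $v_y$ and $v_z$ by a path running through $\interior\hat M$. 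The Leray--Schauder conclusion furnishes a compact connected continuum $C$ linking $v_y$ to $v_z$ on which the relevant solution condition holds, and $C$ must cross $\partial_\rho^0M$ because $v_y$ lies essentially outside while $v_z$ lies in the discrete interior. Along $C$ the distance $\dist(\xi,\Phi_{\alpha^*}^\partial(\cdot))_\infty$ varies continuously, so an intermediate-value argument produces a crossing point $\tilde w\in\partial_\rho^0M$ that inherits $\xi\in\Phi_{\alpha^*}(\tilde w)$ from proximity to $v_z$ while keeping $\dist(\xi,\Phi_{\alpha^*}^\partial(\tilde w))_\infty$ controlled by the continuity along the continuum.

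The main obstacle, as the section's introduction warns, is setting up the Leray--Schauder framework correctly: I must exhibit a fixed-point or coincidence formulation whose solution set is \emph{a priori} bounded and whose boundary behaviour on $\partial\hat M$ is controlled, so that the global branch of solutions is forced to connect the two prescribed points rather than escaping to infinity or collapsing. This requires a careful homotopy $H(x,t)$ interpolating between the identity and $\id+G$ (or between the two selections associated with $v_y$ and $v_z$), together with a degree-nonvanishing condition on $\hat M$; verifying that no solutions lie on $\partial\hat M$ for the homotopy parameters in question is where the geometry of $\gamma^*\in(\rho/2,\rho)$ and the Lipschitz bound $Lh<1$ must be used precisely. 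Once the continuum $C$ is secured, the final extraction of $\tilde w$ and the bookkeeping that produces exactly the constant $\hat\kappa(h,\rho,\beta^*)$ is routine triangle-inequality work, mirroring the estimates already carried out in Propositions \ref{boundary}, \ref{layer}, and \ref{intersection}.
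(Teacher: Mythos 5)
Your high-level outline reproduces the strategy announced at the start of Section 4, but the two steps you defer --- the actual Leray--Schauder setup and the final extraction of $\tilde w$ --- are precisely where the content of the proof lies, and your sketches of both would not work as stated. The paper does \emph{not} apply the global Leray--Schauder theorem to a fixed-point formulation $x\mapsto x+G_\xi(x)$ on $\hat M$ with a degree-nonvanishing condition; no such formulation appears. Instead, after the preliminary reductions (and a separate easy case when the preimage $\tilde y\in\Phi^{-1}(\xi)$ already satisfies $\dist(\tilde y,M)_\infty\le\rho/2$), it constructs \emph{two paths with the same endpoints} $x^*\in\Phi^{-1}(\xi)\cap\partial\hat M$ and $\tilde x\in\Proj(\tilde y,\hat M)_\infty$: a path $\psi$ staying outside $\hat M$ (obtained by truncating the path in $\Phi^{-1}(\xi)$ from Proposition \ref{connected} at its last exit from $\hat M$ and appending the segment from $\tilde y$ to $\tilde x$), and a path $\tilde\psi$ running through $\interior\hat M$ (from Lemma \ref{properties:M:hat}). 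The Leray--Schauder theorem is applied to a \emph{scalar} equation built from the signed distance $\delta_{\hat M}$ of the convex interpolation $\eta\psi(\lambda)+(1-\eta)\tilde\psi(\lambda)$ on parameter rectangles, producing (after a Blaschke limit) a compact connected set $C$ spanning the $\lambda$-interval on which the interpolated point lies on $\partial\hat M$; its image $E$ is a continuum in $\partial\hat M$ containing both $x^*$ and $\tilde x$. Your proposed continuum ``linking $v_y$ to $v_z$'' that ``must cross $\partial_\rho^0M$'' is a different object: a path in $\Phi^{-1}(\xi)$ intersected with a spatial boundary only reproves Proposition \ref{intersection} and gives no control on $\dist(\xi,\Phi_{\alpha^*}^\partial(\cdot))_\infty$ at the crossing point, which is the entire added value of this proposition over that one.

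The second gap is the extraction of $\tilde w$. An intermediate-value argument on $\dist(\xi,\Phi_{\alpha^*}^\partial(\cdot))_\infty$ is not available: $E$ is only a compact connected set, not a path, and in any case you must satisfy two conditions simultaneously ($\xi\in\Phi(\cdot)$ and proximity of $\xi$ to $\Phi_{\alpha^*}^\partial(\cdot)$), not hit a level of one scalar function. The paper's argument is a clopen decomposition: $D_1:=\Phi^{-1}(\xi)\cap E$ is closed in $E$, and if it were disjoint from $D_2:=\{x\in E:\dist(\xi,\Phi_{\alpha^*}^\partial(x))_\infty\le\kappa\}$ it would also be \emph{open} in $E$ --- because $\kappa\ge3\alpha^*$ forces $\xi$ to lie well inside $\Phi(x)$ for $x\in D_1\setminus D_2$, so $\xi\in\Phi(w)$ persists for all nearby $w$ --- whence $E=D_1$ by connectedness, contradicting $\emptyset\neq D_2\subset E\setminus D_1$. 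This quantitative openness step is where the size of $\hat\kappa$ is actually used, and it is absent from your plan. You also omit the concluding limit $\gamma^*\searrow\rho/2$, justified by the finiteness of the grid set $M$, which is needed to arrive at the stated constant $\hat\kappa(h,\rho,\beta^*)$ rather than a $\gamma^*$-dependent bound.
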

\begin{proof}
By Proposition \ref{solvable} there exists $\tilde y\in\Phi^{-1}(\xi)$ such that
\begin{align*}
|\tilde y-y|_\infty \le \frac{\dist(\xi,\Phi(y))_\infty}{1-Lh} \le \frac{\alpha^*+\beta^*}{1-Lh} < \frac{\rho}{1-Lh} \le \frac{4}{3}\rho.
\end{align*}

\medskip

\emph{Case 1: $\dist(\tilde y,M)_\infty\le\rho/2$.} Then there exists
$\tilde w\in\partial_\rho^0M$ with $|\tilde y-\tilde w|_\infty\le\rho/2$, and $\xi\in\Phi_{\alpha^*}(\tilde w)$ holds by Lipschitz continuity.
Moreover,
\begin{align*}
&\dist(\xi,\Phi_{\alpha^*}^\partial(\tilde w))_\infty \le \dist(\xi,\Phi_{\alpha^*}^\partial(y))_\infty 
+ \dist(\Phi_{\alpha^*}^\partial(y),\Phi_{\alpha^*}^\partial(\tilde w))_\infty\\
&\le \beta^* + (1+Lh)|y-\tilde w|_\infty \le \beta^* + (1+Lh)(\frac{\alpha^*+\beta^*}{1-Lh}+\rho/2) \le 2\frac{\alpha^*+\beta^*}{1-Lh}.
\end{align*}

\medskip

\emph{Case 2: $\dist(\tilde y,M)_\infty>\rho/2$.} Fix $\gamma^*>0$ with $\rho/2<\gamma^*<\dist(\tilde y,M)_\infty$,
so that $\tilde y\notin \hat M$. There exists $\tilde z\in\Phi^{-1}(\xi)$ such that
\[|\tilde z-z|_\infty \le \frac{\dist(\xi,\Phi(z))_\infty}{1-Lh} \le \frac{\alpha^*+\beta^*}{1-Lh} \le \frac{\rho}{1-Lh} \le \frac{4}{3}\rho,\]
so that $\tilde z\in\interior\hat M$.
By Proposition \ref{connected}, there exists $\phi\in C([0,1],\R^d)$ such that $\phi([0,1])\subset\Phi^{-1}(\xi)$, 
$\phi(0)=\tilde z$, and $\phi(1)=\tilde y$. Let $\lambda^*:=\sup\{\lambda\in[0,1]: \phi(\lambda)\in\hat M\}$. 
By the above and by continuity of $\phi(\cdot)$, we have $0<\lambda^*<1$.

By continuity of $\phi(\cdot)$ and compactness of $\hat M$, we have $x^*:=\phi(\lambda^*)\in\partial\hat M$.
Moreover, $\phi(\lambda)\in\hat M^c$ for all $\lambda\in(\lambda^*,1]$. 
Take $\tilde x\in\Proj(\tilde y,\hat M)_\infty \subset\partial\hat M$. Then the path defined by $\tilde\phi(\lambda):=\lambda\tilde x+(1-\lambda)\tilde y$
is an element $\tilde\phi\in C([0,1],\R^d)$ satisfying $\tilde\phi(0)=\tilde y$, $\tilde\phi(1)=\tilde x$, and $\tilde\phi(\lambda)\notin\hat M$
for all $\lambda\in[0,1)$. As a consequence, the path $\psi:=\tilde\phi\circ\hat\phi$ with $\hat\phi(\lambda):=\phi(\lambda+(1-\lambda)\lambda^*)$ satisfies
$\psi(0)=x^*$, $\psi(1)=\tilde x$, and $\psi(\lambda)\notin\hat M$ for all $\lambda\in(0,1)$.

By Lemma \ref{properties:M:hat}, there exists some $\tilde\psi\in C([0,1],\R^d)$ such that 
$\tilde\psi(0)=x^*$, $\tilde\psi(1)=\tilde x$, and $\tilde\psi(\lambda)\in\interior\hat M$ for all $\lambda\in(0,1)$.
Let $\delta_{\hat M}(\cdot)$ denote the signed distance function of the set $\hat M$.
Then the mapping 
\[H(\lambda,\eta):=\delta_{\hat M}(\eta\psi(\lambda)+(1-\eta)\tilde\psi(\lambda))+\eta\]
satisfies the assumptions of the global Leray-Schauder Theorem \cite[Theorem 14C]{Zeidler:1} on every rectangle
$[1/n,1-1/n]\times[0,1]$ for $n\in\N$ with $n>1$. As a consequence, there exist compact connected sets
$C_n\subset[0,1]^2$ such that 
\[C_n\cap(\{1/n\}\times[0,1])\neq\emptyset\neq C_n\cap(\{1-1/n\}\times[0,1])\]
and 
\begin{equation} \label{local:1}
\delta_{\hat M}(\eta\psi(\lambda)+(1-\eta)\tilde\psi(\lambda))=0
\end{equation}
for all $(\lambda,\eta)\in C_n$.
Since $C_n\subset[0,1]^2$ is compact for all $n\in\N_1$, there exists a compact set $C\subset[0,1]^2$ such 
that $\lim_{n\rightarrow\infty}\dist_H(C_n,C)_\infty=0$ along a subsequence according to Blaschke's selection theorem \cite[Chapter 4]{Eggleston:58}. 
It follows that 
\[C\cap(\{0\}\times[0,1])\neq\emptyset\neq C\cap(\{1\}\times[0,1])\]
and \eqref{local:1} holds for all $(\lambda,\eta)\in C$.

The set $C$ is connected. Otherwise, there exist disjoint open sets $A_1$ and $A_2$ such that $C\subset A_1\cup A_2$.
Since $C_n$ are connected, there exist points $c_n\in C_n\subset[0,1]^2$ such that $c_n\notin A_1\cup A_2$. As 
$[0,1]^2\setminus(A_1\cup A_2)$ is compact, there exists $c\in[0,1]^2\setminus(A_1\cup A_2)$ such that $c_n\rightarrow c$ 
along a subsequence. By construction $c\in C$, which is a contradiction.

The set 
\[E:=\{\eta\psi(\lambda)+(1-\eta)\tilde\psi(\lambda): (\eta,\lambda)\in C\}\]
is compact and connected as a continuous image of the compact and connected set $C$. We have $x^*,\tilde x\in E$ and
$E\subset \partial\hat M$. 
Since $\xi\in\Phi(x^*)$ and 
\begin{align*}
\dist(\xi,\Phi_{\alpha^*}^\partial(\tilde x))_\infty 
&\le \dist(\xi,\Phi_{\alpha^*}^\partial(y))_\infty + \dist(\Phi_{\alpha^*}^\partial(y),\Phi_{\alpha^*}^\partial(\tilde x))_\infty\\
&\le \beta^*+(1+Lh)(|y-\tilde y|_\infty+|\tilde y-\tilde x|_\infty)\\
&\le \beta^* + (1+Lh)(2|y-\tilde y|_\infty + \dist(y,\hat M)_\infty)\\
&\le \beta^* + 2(1+Lh)\frac{\alpha^*+\beta^*}{1-Lh} + (1+Lh)(\dist(y,M)_\infty-\rho/2)\\
&\le \frac{1+3Lh}{1-Lh}\alpha^* + \frac{3+Lh}{1-Lh}\beta^* + (1+Lh)\dist(y,M)_\infty =: \kappa(h,\rho,\beta^*)
\end{align*}
the sets $D_1:=\Phi^{-1}(\xi)\cap E$ and 
\[D_2:=\{x\in E: \dist(\xi,\Phi_{\alpha^*}^\partial(x))_\infty\le\kappa(h,\rho,\beta^*)\}\]
are nonempty. Assume that $D_1\cap D_2=\emptyset$ and let $x\in D_1$. Since $x\notin D_2$, $\kappa(h,\rho,\beta^*)\ge3\alpha^*$, and $\Phi(\cdot)$
is Lipschitz, there exists $\epsilon>0$ such that $\xi\in\Phi(w)$ for every $w\in B_\epsilon(x)$.
Hence $D_1$ is open in $E$. On the other hand, $D_1$ is closed by construction and continuity of $\Phi(\cdot)$,
so that connectedness of $E$ implies $E=D_1$. But this contradicts $D_2\neq\emptyset$. As a consequence,
there exists $\hat w\in D_1\cap D_2$, and this particular element satisfies $\hat w\in\partial\hat M$, $\xi\in\Phi(\hat w)$, and
\[\xi\in B_{\kappa(h,\rho,\beta^*)}(\Phi_{\alpha^*}^\partial(\hat w))_\infty.\]
By Lemma \ref{properties:M:hat}, there exists $\tilde w\in\partial_\rho^0 M$ such that $|\tilde w-\hat w|_\infty \le \gamma^*$,
and Lipschitz continuity implies $\xi\in\Phi_{(1+Lh)\gamma^*}(\tilde w)$ and
\[\dist(\xi,\Phi_{\alpha^*}^\partial(\tilde w))_\infty\le\kappa(h,\rho,\beta^*)+(1+Lh)\gamma^*.\]
Since the above considerations are correct for any admissible $\gamma^*>\rho/2$ and $M$ contains only finitely many points, 
the statement of the proposition ensues.
\end{proof}

Now Propositions \ref{boundary}, \ref{layer}, and \ref{Leray:Schauder} enable the following boundary Euler algorithm.
A justification is given in Theorem \ref{summary:2}.

\bigskip

\noindent\fbox{\begin{minipage}{\textwidth}
Assume that $\partial_\rho^0M$ and $\partial_\rho^1M$ are known and that $M$ is chain-connected.
\begin{itemize}
\item [1.] Compute $\partial_\rho^{-1}M$ and $\partial_\rho^2M$.
\item [2.] Compute 
$S_0^0:=(\Phi_{\alpha^*,\beta^*}(\partial_\rho^0M)\cap B_{\hat\kappa(h,\rho,\beta^*)}(\Phi_{\alpha^*,\beta^*}^\partial(\partial_\rho^0M)))\cap\Delta_\rho$,\\
$S_0^{-1}:= \Phi_{\alpha^*,\beta^*}^\partial(\partial_\rho^{-1}M))\cap\Delta_\rho$, 
and $S_1:=(\Phi_{\alpha^*,\beta^*}^\partial(\partial_\rho^1M\cup\partial_\rho^2M))\cap\Delta_\rho$.

\item [3.] Compute $\partial_\rho^1(\Phi_{\alpha^*,\beta^*}(M))=\{x\in S_1: \dist(x,S_0^0\cup S_0^{-1})=\rho\}$.
\item [4.] Compute $\partial_\rho^0(\Phi_{\alpha^*,\beta^*}(M))=\{x\in S_0^0\cup S_0^{-1}: \dist(x,\partial_\rho^1(\Phi_{\alpha^*,\beta^*}(M)))=\rho\}$.
\end{itemize}
\end{minipage}}

\bigskip

A graphic explanation of this algorithm is given in Section \ref{section:topological:change}.

\begin{theorem} \label{summary:2}
The boundary Euler scheme is well-defined.
\end{theorem}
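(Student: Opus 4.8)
The plan is to mirror the proof of Theorem \ref{summary} line by line, with the sharp Proposition \ref{Leray:Schauder} taking over the role played there by Proposition \ref{intersection}, the only genuine change being that $S_0$ has been shrunk to $S_0^0\cup S_0^{-1}$. First I would establish the sandwich
\[\partial_\rho^0(\Phi_{\alpha^*,\beta^*}(M))\subset S_0^0\cup S_0^{-1}\subset\Phi_{\alpha^*,\beta^*}(M),\]
the exact analogue of \eqref{tag:3}. For the left inclusion, Proposition \ref{boundary} hands me, for each $y\in\partial_\rho^0(\Phi_{\alpha^*,\beta^*}(M))$, a witness $x\in\partial_\rho^0M\cup\partial_\rho^{-1}M$ with $y\in\Phi_{\alpha^*}^\partial(x)$; if $x\in\partial_\rho^{-1}M$ then $y\in S_0^{-1}$ at once, whereas if $x\in\partial_\rho^0M$ then $y\in\Phi_{\alpha^*}(x)\subset\Phi_{\alpha^*,\beta^*}(\partial_\rho^0M)$ and $\dist(y,\Phi_{\alpha^*,\beta^*}^\partial(\partial_\rho^0M))_\infty=0\le\hat\kappa$, so $y\in S_0^0$. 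The right inclusion is immediate from $\partial\Phi(\cdot)\subset\Phi(\cdot)$, which gives $\Phi_{\alpha^*,\beta^*}^\partial(\cdot)\subset\Phi_{\alpha^*,\beta^*}(\cdot)$ and hence $S_0^0\cup S_0^{-1}\subset\Phi_{\alpha^*,\beta^*}(M)$.

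With the sandwich in hand, Proposition \ref{layer} yields $\partial_\rho^1(\Phi_{\alpha^*,\beta^*}(M))\subset S_1$, and the two inclusions combine exactly as in Theorem \ref{summary} to give $\partial_\rho^1(\Phi_{\alpha^*,\beta^*}(M))\subset\{x\in S_1:\dist(x,S_0^0\cup S_0^{-1})=\rho\}$ and to reduce the identity in step~4 to that in step~3. Everything therefore comes down to the reverse inclusion in step~3, for which it suffices to prove $S_1\cap\Phi_{\alpha^*,\beta^*}(M)\subset S_0^0\cup S_0^{-1}$, equivalently $S_1\setminus(S_0^0\cup S_0^{-1})\subset\Phi_{\alpha^*,\beta^*}(M)^c$.

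This last containment is the crux, and it is the place where the $\hat\kappa$-neighbourhood in the definition of $S_0^0$ must be paid for. Given $\xi\in S_1\cap\Phi_{\alpha^*,\beta^*}(M)$, membership in $S_1$ produces some $y\in\partial_\rho^1M\cup\partial_\rho^2M\subset M^c\cap\Delta_\rho$ with $\xi\in\Phi_{\alpha^*,\beta^*}^\partial(y)$, and membership in the image produces $z\in M\cap\Delta_\rho$ with $\xi\in\Phi_{\alpha^*,\beta^*}(z)$. When $z\in\interior_\rho M$ I would feed the pair $(y,z)$ into Proposition \ref{Leray:Schauder}, obtaining $\tilde w\in\partial_\rho^0M$ with $\xi\in\Phi_{\alpha^*}(\tilde w)\subset\Phi_{\alpha^*,\beta^*}(\partial_\rho^0M)$ and $\dist(\xi,\Phi_{\alpha^*}^\partial(\tilde w))_\infty\le\hat\kappa$; since $\Phi_{\alpha^*}^\partial(\tilde w)\subset\Phi_{\alpha^*,\beta^*}^\partial(\partial_\rho^0M)$ this is precisely $\xi\in S_0^0$. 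The main obstacle I expect is the complementary case $z\in\partial_\rho^0M$ with no interior preimage of $\xi$ available — in particular the situation of empty discrete interior — because Proposition \ref{Leray:Schauder} is stated only for interior $z$. Here I would argue that a point reached from the full image of a boundary grid point but from no interior image must sit within $Lh\rho$ of $\partial\Phi(z)$: if $\xi\notin\Phi(z)$ then $\dist(\xi,\partial\Phi(z))_\infty=\dist(\xi,\Phi(z))_\infty\le\alpha^*+\beta^*$ directly, while if $\xi$ sat deeper than $Lh\rho$ inside $\Phi(z)$ the $Lh$-Lipschitz continuity of $\Phi(\cdot)$ would place $\xi$ in $\Phi(z')$ for a neighbouring interior grid point $z'$, contradicting the case hypothesis; as $\alpha^*>Lh\rho$, either way this forces $\dist(\xi,\Phi_{\alpha^*}^\partial(z))_\infty\le\hat\kappa$ with $\xi\in\Phi_{\alpha^*,\beta^*}(\partial_\rho^0M)$, hence $\xi\in S_0^0$. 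Pinning down this proximity estimate cleanly, and covering the degenerate thin sets where no neighbouring interior point exists — presumably by falling back on the elementary Case~1 estimate inside the proof of Proposition \ref{Leray:Schauder} applied to the preimage of $\xi$ near $y$ — is the delicate point on which well-definedness ultimately rests.
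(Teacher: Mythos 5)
Your overall architecture --- the sandwich $\partial_\rho^0(\Phi_{\alpha^*,\beta^*}(M))\subset S_0^0\cup S_0^{-1}\subset\Phi_{\alpha^*,\beta^*}(M)$ via Proposition \ref{boundary}, the reduction of steps 3 and 4 to the single containment $S_1\setminus(S_0^0\cup S_0^{-1})\subset\Phi_{\alpha^*,\beta^*}(M)^c$, and the invocation of Proposition \ref{Leray:Schauder} when the witness $z$ lies in $\interior_\rho M$ --- is exactly the paper's proof, and you are right that the published one-line citation of Proposition \ref{Leray:Schauder} silently passes over the case where the only $z\in M$ with $\xi\in\Phi_{\alpha^*,\beta^*}(z)$ lie in $\partial_\rho^0M$. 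But your treatment of that case is where the genuine gap sits. The quantitative claim is off ($\Phi=\id+hF$ and $\partial\Phi(\cdot)$ are $(1+Lh)$-Lipschitz, not $Lh$-Lipschitz, so ``deeper than $Lh\rho$'' is not the right threshold), and, more seriously, the step ``$\xi$ deep inside $\Phi(z)$ forces $\xi\in\Phi(z')$ for a neighbouring \emph{interior} grid point $z'$'' presupposes that such a neighbour exists, while your fallback for thin sets --- the Case 1 estimate from the proof of Proposition \ref{Leray:Schauder} applied to the preimage of $\xi$ near $y$ --- requires $\dist(\tilde y,M)_\infty\le\rho/2$ for that preimage, which nothing in your case hypothesis guarantees. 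As written, the degenerate configuration ($\interior_\rho M$ thin or empty, $\xi$ deep inside $\Phi(z)$, $\tilde y$ far from $M$) is left open, and you acknowledge as much.

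The case you are struggling with is in fact the easy one; the key is to stop estimating from $z$ and use the witness $y$ instead. If $z\in\partial_\rho^0M$, then $\xi\in\Phi_{\alpha^*,\beta^*}(\partial_\rho^0M)$ is immediate, so only the membership $\xi\in B_{\hat\kappa(h,\rho,\beta^*)}(\Phi_{\alpha^*,\beta^*}^\partial(\partial_\rho^0M))$ remains --- and this holds for \emph{every} $\xi\in S_1$ with no reference to $z$ at all. Indeed, $\xi\in\Phi_{\alpha^*,\beta^*}^\partial(y)$ gives $\dist(\xi,\Phi_{\alpha^*}^\partial(y))_\infty\le\beta^*$, and since $y\in\partial_\rho^1M\cup\partial_\rho^2M$ its nearest point of $M$ is a $w\in\partial_\rho^0M$ with $|y-w|_\infty=\dist(y,M)_\infty$, whence
\[
\dist(\xi,\Phi_{\alpha^*}^\partial(w))_\infty\le\beta^*+(1+Lh)\,\dist(y,M)_\infty\le\hat\kappa(h,\rho,\beta^*);
\]
the summand $(1+Lh)\dist(y,M)_\infty$ in the definition of $\hat\kappa$ is there precisely to absorb this term. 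So $z\in\partial_\rho^0M$ yields $\xi\in S_0^0$ in two lines, with no depth estimate and no Leray--Schauder argument, and Proposition \ref{Leray:Schauder} is needed only for $z\in\interior_\rho M$, exactly as the paper uses it.
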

\begin{proof}
By Proposition \ref{boundary} we have 
\begin{equation} \label{tag:5}
\partial_\rho^0(\Phi_{\alpha^*,\beta^*}(M)) \subset S_0^0\cup S_0^{-1} \subset \Phi_{\alpha^*,\beta^*}(M),
\end{equation}
and by Proposition \ref{layer} we have $\partial_\rho^1(\Phi_{\alpha^*,\beta^*}(M)) \subset S_1$,
so that together with \eqref{tag:5}
\[\partial_\rho^1(\Phi_{\alpha^*,\beta^*}(M))\subset\{x\in S_1: \dist(x,S_0^0\cup S_0^{-1})=\rho\}\] 
ensues.
Since Proposition \ref{Leray:Schauder} ensures $S_1\setminus (S_0^0\cup S_0^{-1})\subset\Phi_{\alpha^*,\beta^*}(M)^c$,
it follows that
\[\partial_\rho^1(\Phi_{\alpha^*,\beta^*}(M))\supset\{x\in S_1: \dist(x,S_0^0\cup S_0^{-1})=\rho\}.\] 
Finally,
\[\partial_\rho^0(\Phi_{\alpha^*,\beta^*}(M))=\{x\in S_0^0\cup S_0^{-1}: \dist(x,\partial_\rho^1(\Phi_{\alpha^*,\beta^*}(M)))=\rho\}\]
follows from \eqref{tag:5}.
\end{proof}

\section{Numerical examples}

In the following, the boundary Euler scheme will be examined in several numerical 
tests. Its speed of convergence, its ability to cope with topological changes of the reachable set,
and its failure due to violation of the assumptions imposed in this paper are addressed in 
carefully chosen examples. 

The boundary Euler scheme and the classical Euler scheme are implemented in C++, and the hash
container class unordered\_set of the open source library boost is used to store the data
efficiently. The visualization is done in Matlab.

\subsection{Speed of convergence}

\begin{figure}[p]
\begin{center}
\includegraphics[scale=0.8]{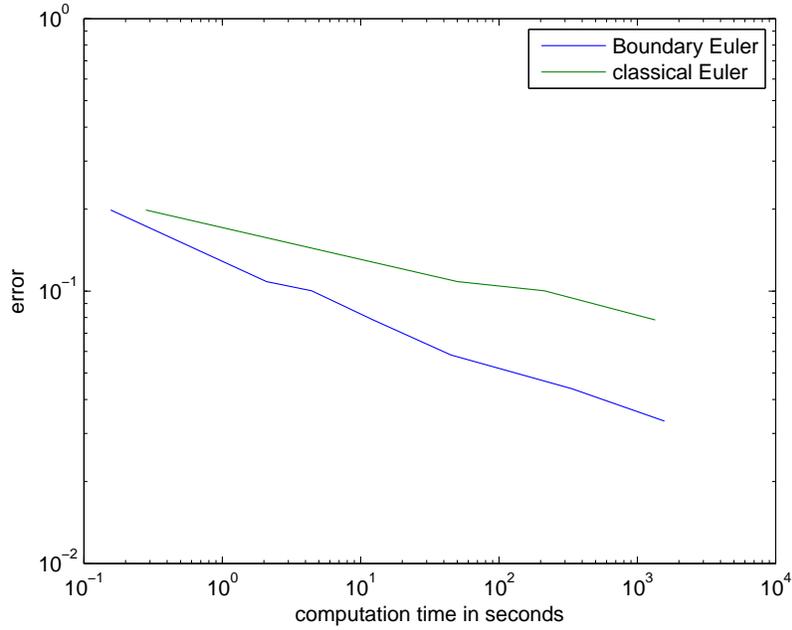}
\end{center}
\caption{Numerical errors of classical Euler scheme and boundary Euler scheme applied to \eqref{linear:odi} plotted against
computational cost. \label{error:figure}}
\end{figure}

\begin{table}[p]
\begin{center}
\begin{tabular}{c|c|c}
time Euler scheme [s] & time boundary Euler [s] & numerical error\\
\hline
0.821 & 0.156 & 0.1983\\
50.13 & 2.09 & 0.1083\\
213.73 & 4.43 & 0.1002\\
1350.25 & 12.355 & 0.0783\\
- & 45.006 & 0.0583\\
- & 336.79 & 0.0438\\
- & 1570.93 & 0.0333
\end{tabular}
\end{center}
\caption{Numerical errors of both schemes applied to \eqref{linear:odi} and corresponding
computational cost. \label{error:table}}
\end{table}

For measuring the speed of convergence, we consider the ordinary differential inclusion
\begin{equation} \label{linear:odi}
x'(t) \in x(t) + B_1(0)_\infty,\quad x(0)=0\in\R^2,
\end{equation}
because the behavior of the numerical error is realistic while the inclusion is
simple enough to admit the closed solution
\[\mc{R}(T,\{0\})=B_{e^t-1}(0)_\infty.\]
Moreover, the simple structure of the reachable set allows a reliable computation
of the numerical errors w.r.t.\ the Hausdorff distance. The numerical errors and the 
corresponding computation times (on an ordinary laptop) are listed in Table 
\ref{error:table} and visualized in Figure \ref{error:figure}.

As explained in \cite{Beyn:Rieger:07}, it is reasonable to
use the spatial discretization parameter $\rho=h^2$.
It is doubtful whether the notion of an order of convergence makes sense
in the set-valued context, because the performance of the schemes depends heavily
on the space dimension. In this particular example, however, 
the estimated rates of convergence in terms of the computational costs necessary to achieve 
a given precision are 0.108 for the classical Euler scheme and 0.192 for the Boundary
Euler, which makes a remarkable difference.

\subsection{Topological changes of the reachable set} \label{section:topological:change}

\begin{figure}[t]
\begin{center}
\includegraphics[scale=0.8]{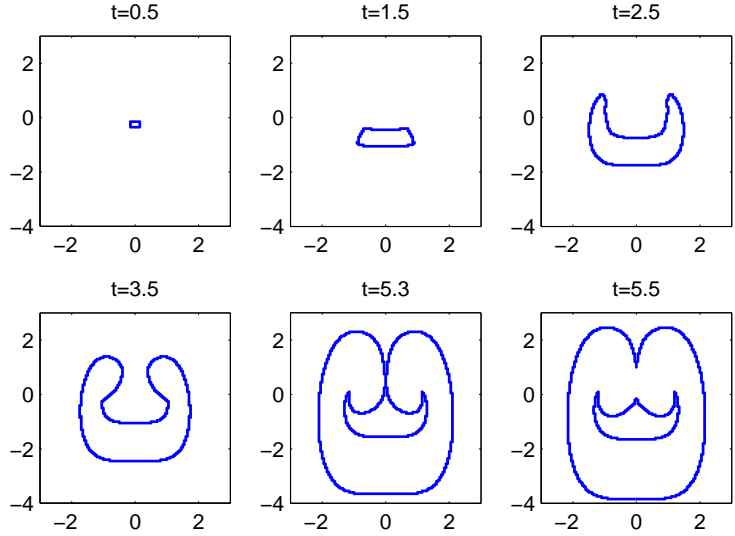}
\end{center}
\caption{The boundary Euler scheme applied to \eqref{Schnurrbart} and change of topology of the reachable set. \label{schnurrbart:figure}}
\end{figure}

\begin{figure}[p]
\begin{center}
\includegraphics[scale=0.8]{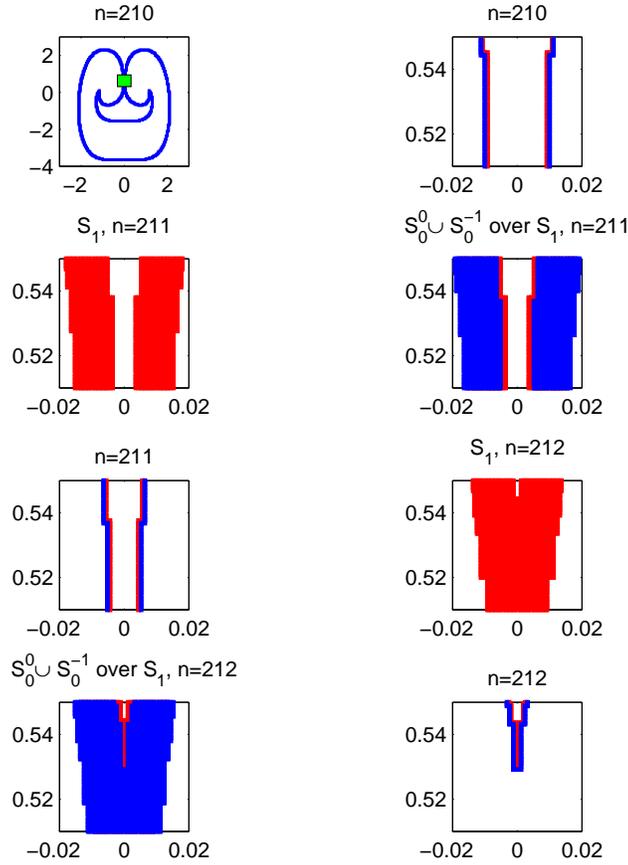}
\end{center}
\caption{A closeup of the critical time-step where the reachable set of the boundary Euler scheme applied to \eqref{Schnurrbart} 
changes its topology. The green rectangle in
the first plot indicates the location of the magnified region. Points generating or being part of the outer layer are colored red,
points generating or being part of the boundary of the reachable set are colored blue. \label{closeup:figure}}
\end{figure}

Unlike sets transported by ordinary differential equations, sets evolving under differential inclusions can
change their topology. Consider the nonlinear ordinary differential inclusion
\begin{equation} \label{Schnurrbart}
\begin{pmatrix} x_1'\\x_2' \end{pmatrix} = \begin{pmatrix} x_1(1-|x_1|)-x_1x_2\\ x_1^4-1/2 \end{pmatrix} + B_{1/5}(0)_\infty,\quad 
x(0)=0\in\R^2.
\end{equation}
The reachable set $\mc{R}(T,\{0\})$ of inclusion \eqref{Schnurrbart} is simply connected
for $T\in[0,5.275]$, but not for $T=5.3$. The evolution of the reachable set for $h=0.025$ and $\rho=h^2$ 
is shown in Figure \ref{schnurrbart:figure}. 

A closeup of the critical
transition is depicted in Figure \ref{closeup:figure}, which needs some explanation. The small green box in the first plot indicates the 
location of the magnified spot, and the second shows the situation at this location after 210 iterations. The blue line is the 
boundary $\partial_\rho^0\tilde{\mc{R}}_h(5.275,\{0\})$ of the reachable set, and the red line is the layer 
$\partial_\rho^1\tilde{\mc{R}}_h(5.275,\{0\})$
of exterior points the algorithm needs to keep track of the topology of the set. 

\begin{figure}[t]
\begin{center}
\includegraphics[scale=0.8]{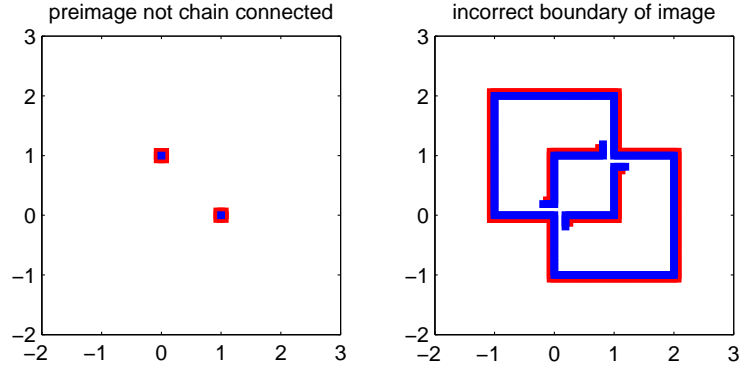}
\end{center}
\caption{Failure of the boundary Euler scheme for disconnected initial sets. \label{failure:figure:1}}
\end{figure}

The third plot shows the set $S_1$ (specified in the algorithm)
that contains the layer $\partial_\rho^1\tilde{\mc{R}}_h(5.3,\{0\})$ of exterior points of the set
$\tilde{\mc{R}}_h(5.3,\{0\})$ to be computed. In the fourth plot, the set $S_1$ is overlaid by $S_0^0\cup S_0^{-1}$,
deleting everything but $\partial_\rho^1\tilde{\mc{R}}_h(5.3,\{0\})$ from $S_1$, and in the fifth,
only those points of $S_0^0\cup S_0^{-1}$ are kept that have a neighbor in $\partial_\rho^1\tilde{\mc{R}}_h(5.3,\{0\})$,
so that $S_0^0\cup S_0^{-1}$ is reduced to $\partial_\rho^0\tilde{\mc{R}}_h(5.3,\{0\})$.

In the following plots, this procedure is repeated, but the seventh plot shows that in this step $S_0^0\cup S_0^{-1}$
erases a large portion of $S_1$, so that no new boundary is generated in that place and the change of topology of the 
exact reachable set is reproduced by the discrete approximation. 

\medskip

The opposite effect -- the closing of a hole in the reachable set -- can be observed in the simple example
\begin{equation} \label{simple:inclusion}
x'(t) \in B_1(0)_\infty,\quad x(0)\in X_0=\{x\in\R^2: 1\le|x|_\infty\le 2\}
\end{equation}
that is not displayed here, because the mechanism behind the change of topology is essentially the same as for 
inclusion \eqref{Schnurrbart}.

\begin{figure} [t]
\begin{center}
\includegraphics[scale=0.8]{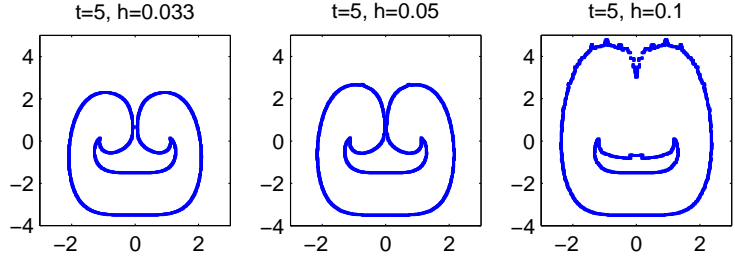}
\end{center}
\caption{Failure of the boundary Euler scheme because of $Lh\ge 1/4$. \label{failure:figure:2}}
\end{figure}

\subsection{Failure due to violated assumptions}

The proof of the propriety of the boundary Euler scheme relies on the assumption that the initial set $B_{\alpha^*}(X_0)\cap\Delta_\rho$
is chain-connected. Applying one step of the boundary Euler to the simple differential inclusion \eqref{simple:inclusion} with 
disconnected initial set $X_0=\{(0,1),(1,0)\}$, $h=1/4$, and $\rho=1/16$ demonstrates that this assumption is not imposed for convenience, 
but relevant for practical computation. Figure \ref{failure:figure:1} shows that the boundary $\partial_\rho^0\tilde{\mc{R}}(1/4,X_0)$
of the reachable set at time $1/4$ is not correctly computed, because inner points of the reachable set are marked as boundary points.
Therefore, if the initial set is disconnected, the preliminary version of the boundary Euler scheme must be used, which outperforms the classical Euler 
scheme, but is slower than the boundary Euler scheme in its final form.

\medskip

Figure \ref{failure:figure:2} shows what may happen if the assumption $Lh<1/4$ is violated.
In example \eqref{Schnurrbart}, the boundaries of the discrete reachable 
sets do not only become more and more inaccurate, but literally fall apart when $h$ is increased. 
This effect is known for the classical Euler scheme, but it is 
only ungainly and not harmful there, because the discrete reachable sets still approximate the original reachable sets
with prescribed accuracy. Since the boundary Euler is supposed to compute a real boundary, the effect 
is not tolerable in this setting.

%

%

\bibliographystyle{plain}
\bibliography{standard}

\end{document}